\numberwithin{equation}{section}
\newtheorem{theorem}{Theorem}[section]
\newtheorem{lemma}[theorem]{Lemma}
\newtheorem{proposition}[theorem]{Proposition}
\newtheorem{corollary}[theorem]{Corollary}
\theoremstyle{definition}
\newtheorem{definition}[theorem]{Definition}
\newtheorem{remark}[theorem]{Remark}
\newcommand\Supp{\operatorname{Supp}}
\newcommand\Ass{\operatorname{Ass}}
\newcommand\mAss{\operatorname{mAss}}
\newcommand\Ann{\operatorname{Ann}}
\newcommand\Spec{\operatorname{Spec}}
\newcommand\Rad{\operatorname{Rad}}
\newcommand\Ext{\operatorname{Ext}}
\begin{document}
\title[Quintasymptotic primes and ideal topologies]{Asymptotic behaviour of integral closures, quintasymptotic primes and ideal topologies}%
\author[Reza naghipour and Peter Schenzel]{Reza Naghipour$^*$ and Peter Schenzel}
\address{Department of mathematics, University of Tabriz, Tabriz, Iran;
and School of Mathematics, Institute for Research in Fundamental
Sciences (IPM), P.O. Box: 19395-5746, Tehran, Iran.}
\email{naghipour@ipm.ir}  \email{naghipour@tabrizu.ac.ir}%
\address{Martin-l\"uther-universit\"at halle-wittenberg, fachbereich mathematik and informatik, d-06099 halle (saale), germany}%
\email{schenzel@mathematik.uni-halle.de}%
\thanks{ 2010 {\it Mathematics Subject Classification}:   13E05, 13B2.\\
The first author thanks the Martin-L\"uther-Universit\"at  Halle-Wittenberg for the hospitality and facilities offered  during the preparation of this paper.\\
$^*$Corresponding author: e-mail: {\it naghipour@ipm.ir} (Reza Naghipour)}%
\subjclass{}%
\keywords{Integral closure, ideal topologies, local cohomology,  quintasymptotic prime.}%

\begin{abstract}
Let $R$ be a  Noetherian  ring,  $N$  a
finitely generated $R$-module and $I$ an ideal of $R$. It is  shown that the sequences
$\Ass _R R/(I^n)_a^{(N)}$, $\Ass _R (I^n)_a^{(N)}/ (I^{n+1})^{(N)}_a$ and   $\Ass _R (I^n)_a^{(N)}/ (I^n)_a, n= 1,2, \dots,$
of associated prime ideals, are increasing and ultimately constant
for large $n$.
Moreover,  it is  shown that, if $S$ is a  multiplicatively closed subset of $R$,
then the topologies defined by $(I^n)_a^{(N)}$ and
$S((I^n)_a^{(N)}), \,{n\geq1}$,  are equivalent if and only if $S$ is disjoint
from the quintasymptotic primes of $I$.
By using this, we also show that, if $(R, \mathfrak{m})$ is
local and $N$ is quasi-unmixed,
then the local cohomology module $H^{\dim N}_I(N)$ vanishes if and only
if there exists a multiplicatively closed subset $S$ of $R$ such
that $\mathfrak{m} \cap S \neq \emptyset$ and that the topologies
induced by $(I^n)_a^{(N)}$ and
$S((I^n)_a^{(N)}), \, {n\geq1},$ are equivalent.
\end{abstract}
\maketitle

\section {Introduction}
The important concept of integral closure of an
ideal of a commutative Noetherian ring (with identity),  developed
by D. G. Northcott and D. Rees in \cite{NR}, is fundamental to a considerable body of recent and current  research both in
commutative algebra and algebraic geometry. Let $R$ be a
commutative ring (with identity), $I$ an ideal of $R$.  In the case when $R$ is Noetherian, we denote by $(I)_a$ the integral closure
of $I$, i.e., $(I)_a$ is the ideal of $R$ consisting of all
elements $x\in R$ which satisfy  an equation $x^n+ r_1x^{n-1}+
\cdots + r_n= 0$, where $r_i\in I^i, i=1, \ldots, n$.

In \cite{R} L.J. Ratliff, Jr., has shown that (when $R$ is Noetherian),  the sequence of
associated prime ideals
\[
\Ass_R R/(I^n)_a,  n= 1,2, \ldots ,
\]
is increasing and ultimately constant;  we  use the notation $A^*_a(I)$ to
denote   $\Ass_R R/(I^n)_a$ for large $n$.

The notion  of  integral closures of
ideals of $R$   relative  to a  Noetherian $R$-module $N$,
 was initiated by R.Y. Sharp  et al., in \cite{STY}.
  An element $x\in R$ is said to
be {\it integrally dependent on $I$ relative  to} $N$ if there
exists a positive integer $n$ such that
$x^{n}N \subseteq \sum_{i=1}^n x^{n-i}I^iN.$
Then the  set
\begin{center}
$I^{(N)}_a=\{x\in R\,|\, x$ is integrally dependent on $I$ relative  to $N$\}
\end{center}
 is an ideal of $R$, called the {\it integral closure of $I$ relative  to}
$N$, in the case $N=R$, $I^{(N)}_a$ is the classical integral closure  $I_a$ of $I$.
 It is clear that $I\subseteq I^{(N)}_a.$ We say that $I$ is
{\it integrally closed} relative  to $N$ if $I= I^{(N)}_a.$

In the second section (among other things) we
show that, when $R$ is a Noetherian ring and $N$ is a  finitely generated $R$-module,  the sequences
\[
\Ass_R R/(I^n)^{(N)}_a, \,\,\,  \Ass _R (I^n)_a^{(N)}/ (I^{n+1})^{(N)}_a \text{ and }   \Ass _R (I^n)_a^{(N)}/ ((I+\Ann_R N)^n)_a, \,\, n= 1,2, \ldots,
\]
of associated  primes,  are ultimately constant; we let $A^*_a(I, N):=\Ass_R R/(I^n)^{(N)}_a$ and
 $C^*_a(I, N):=\Ass _R (I^n)_a^{(N)}/ ((I+\Ann_R N)^n)_a$, for large $n$. Pursing this point of view further we shall show that 
 $A^*_a(I+ \Ann_R N)\setminus C^*_a(I, N) \subseteq A^*_a(I, N).$

 In \cite{Mc2}, McAdam studied the following interesting
set of prime ideals of $R$ associated with $I$,
\[
\bar{Q}^*(I)= \{\mathfrak{p} \in \Spec R  : \text{ there  exists
a } \mathfrak{q} \in \mAss \hat{R}_{\mathfrak{p}} \text{ such
that} \Rad (I\hat{R}_{\mathfrak{p}}+ \mathfrak{q})=
\mathfrak{p}\hat{R}_{\mathfrak{p}}\},
\]
and he called  $\bar{Q}^*(I)$  the set of {\it quintasymptotic prime ideals} of $I$.

 On the other hand,  Ahn  in \cite{Ah} extended the notion  of the quintasymptotic prime ideals
to a finitely generated module over $R.$ More
precisely, if $N$ is a finitely generated $R$-module then a prime
ideal $\mathfrak{p}$ of $R$ is said to be a {\it quintasymptotic
prime ideal} of $I$ with respect to $N$ whenever there exists a
$\mathfrak{q}\in \mAss_{\hat{R}_\mathfrak{p}}\hat{N}_\mathfrak{p}$
such that $\Rad(I\hat{R}_{\mathfrak{p}}+ \mathfrak{q})=
\mathfrak{p}\hat{R}_{\mathfrak{p}}.$ The set of all {\it
quintasymptotic prime ideals} of $I$ with respect to $N$ is denoted by
$\bar{Q}^*(I,N).$

In the third section, for a multiplicatively closed subset $S$ of $R$, we examine the equivalence  between the topologies defined by the filtrations $\{(I^n)_a^{(N)}\}_{n\geq1}$,
$\{S((I^n)_a^{(N)})\}_{n\geq1}$, $\{S(((I+ \Ann_RN)^n)_a)\}_{n\geq 1}$  and $\{S((I+ \Ann_RN)^n)\}_{n\geq 1}$  by using
the quintasymptotic prime ideals of $I$ with respect to $N$. Some of these results has been established,  by Schenzel in \cite{Sc, Sc1},
McAdam in \cite{Mc2} and Mehrvarz  et al., in \cite{MNS}, in certain case when $N=R$.

 A typical result in this direction is the following:
\begin{theorem}
Let $N$ be  a finitely generated
module over a  Noetherian  ring  $R$ and let $I$ be an ideal of $R$.  Let $S$ be a multiplicatively closed subset  of $R$. Then
the topologies defined by  $(I^n)_a^{(N)}$,
$S((I^n)_a^{(N)})$, $S(((I+ \Ann_RN)^n)_a)$  and  $S((I+ \Ann_RN)^n), \, {n\geq 1},$  are
equivalent if and only if $S$ is disjoint from each of the quintasymptotic prime ideals of $I$ with respect to $N$.
\end{theorem}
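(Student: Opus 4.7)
The plan is to reduce the theorem to the classical ideal-theoretic equivalence of Schenzel \cite{Sc,Sc1}, McAdam \cite{Mc2}, and Mehrvarz--Naghipour--Sedghi \cite{MNS}, via the quotient ring $\bar R := R/\Ann_R N$, and to reassemble the resulting equivalences using the asymptotic-prime stabilizations proved in Section~2. Let $\pi\colon R \twoheadrightarrow \bar R$ be the quotient map, and set $\bar I := \pi(I)$ and $\bar S := \pi(S)$.

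The first step is to establish the two identifications
\[
(I^n)_a^{(N)} \;=\; \pi^{-1}\bigl((\bar I^n)_a\bigr)
\qquad\text{and}\qquad
\bar Q^*(I,N) \;=\; \pi^{-1}\bigl(\bar Q^*(\bar I)\bigr).
\]
The first follows directly from the definition of relative integral closure: the condition $x^k N \subseteq \sum_{i=1}^{k} x^{k-i} I^{ki} N$ is equivalent to $\bar x$ satisfying a monic integral equation over $\bar I^n$ in $\bar R$, because $N$ is a faithful $\bar R$-module. The second rests on $\Ann_{\hat R_\mathfrak{p}} \hat N_\mathfrak{p} = \Ann_R(N)\hat R_\mathfrak{p}$ together with the canonical isomorphism $\hat R_\mathfrak{p}/\Ann_R(N)\hat R_\mathfrak{p} \cong \widehat{\bar R}_{\bar{\mathfrak{p}}}$, which identifies $\mAss \hat N_\mathfrak{p}$ with $\mAss \widehat{\bar R}_{\bar{\mathfrak{p}}}$. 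Combining these, $S \cap \bar Q^*(I,N) = \emptyset$ if and only if $\bar S \cap \bar Q^*(\bar I) = \emptyset$.

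Applying the classical theorem to $(\bar R, \bar I, \bar S)$ then yields the simultaneous equivalence of the topologies defined by $\{(\bar I^n)_a\}$, $\{\bar S((\bar I^n)_a)\}$, and $\{\bar S(\bar I^n)\}$ on $\bar R$, precisely under $\bar S \cap \bar Q^*(\bar I) = \emptyset$. Pulling back via $\pi$ recovers the equivalence between $\{(I^n)_a^{(N)}\}$, $\{S((I^n)_a^{(N)})\}$, and (after a discrepancy argument) the two remaining filtrations $\{S(((I+\Ann_R N)^n)_a)\}$ and $\{S((I+\Ann_R N)^n)\}$ on $R$. The discrepancy argument compares $((I+\Ann_R N)^n)_a$ and $(I+\Ann_R N)^n$ with the pullback $(I^n)_a^{(N)}$: while the containments $(I+\Ann_R N)^n \subseteq ((I+\Ann_R N)^n)_a \subseteq (I^n)_a^{(N)}$ are immediate, the reverse inclusions (up to a shift in $n$) on the $S$-saturated filtrations exploit the Section~2 containment $A^*_a(I+\Ann_R N)\setminus C^*_a(I,N) \subseteq A^*_a(I,N)$ to show that the primes appearing in the discrepancy $(I^n)_a^{(N)}/((I+\Ann_R N)^n)_a$ are already dominated by $\bar Q^*(I,N)$.

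The main obstacle I anticipate is this final discrepancy step: the filtrations $\{((I+\Ann_R N)^n)_a\}$ and $\{(I^n)_a^{(N)}\}$ on $R$ are genuinely distinct, and showing that their $S$-saturations agree topologically requires careful control of the stable set $C^*_a(I,N)$ via the asymptotic-prime machinery of Section~2, together with the verification that every prime in $C^*_a(I,N)$ is avoided by $S$ whenever $S$ avoids $\bar Q^*(I,N)$. The compatibility of completion with quotients by the finitely generated ideal $\Ann_R N$ will be used repeatedly to move between the intrinsic definition of $\bar Q^*(I,N)$ on $\hat N_\mathfrak{p}$ and its incarnation in $\widehat{\bar R}_{\bar{\mathfrak{p}}}$.
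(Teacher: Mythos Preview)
Your reduction strategy via $\bar R = R/\Ann_R N$ is genuinely different from the paper's route. The paper does \emph{not} invoke the classical $N=R$ theorem; instead it redevelops the whole argument in the module setting, proving a module-theoretic characterization of $\bar Q^*(I,N)$ (Proposition~3.10) and then running Chevalley's theorem directly inside $\widehat{R_{\mathfrak p}}$ after localizing and completing. Your identifications $(I^n)_a^{(N)} = \pi^{-1}((\bar I^n)_a)$ and $\bar Q^*(I,N) = \pi^{-1}(\bar Q^*(\bar I))$ are correct (the first is Lemma~2.1(iii), the second is exactly the completion argument you sketch), and they give the core equivalence ``$S$ avoids $\bar Q^*(I,N)$ $\Leftrightarrow$ $\{(I^n)_a^{(N)}\}$ and $\{S((I^n)_a^{(N)})\}$ are equivalent'' immediately from the classical case. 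That part of your plan is cleaner than the paper's.

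The discrepancy step, however, has a real gap. First, the introductory statement you were given is imprecise: the precise Theorem~3.11 only asserts that $\{S(((I+\Ann_R N)^n)_a)\}$ and $\{S((I+\Ann_R N)^n)\}$ are \emph{finer than} $\{(I^n)_a^{(N)}\}$, not equivalent to it. Full equivalence is false: with $R=k[x,y]$, $N=R/(x)$, $I=(y)$, $S=\{1\}$ one has $(I^n)_a^{(N)}=(x,y^n)$ but $(I+\Ann_R N)^n=(x,y)^n$, and $x\notin(x,y)^2$; so your ``reverse inclusions up to a shift'' cannot exist. Second, even for the correct finer-than cycle, the reduction does not close the loop at $(\mathrm{iv})\Rightarrow(\mathrm{i})$. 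The obstruction is that $(I+\Ann_R N)^m$ does \emph{not} contain $\Ann_R N$, so $S((I+\Ann_R N)^m)$ is not the pullback $\pi^{-1}(\bar S(\bar I^m))=S((I+\Ann_R N)^m+\Ann_R N)$; from $(\mathrm{iv})$ in $R$ you only get $\pi(S((I+\Ann_R N)^m))\subseteq(\bar I^n)_a$, which is strictly weaker than the classical $(\mathrm{iv})$ in $\bar R$. Your invocation of $C^*_a(I,N)$ and Proposition~2.8 is aimed at the wrong discrepancy (primes of $(I^n)_a^{(N)}/((I+\Ann_R N)^n)_a$) rather than this pullback defect, and the needed inclusion $C^*_a(I,N)\subseteq\bar Q^*(I,N)$ is nowhere established. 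The paper closes this direction instead via Proposition~3.10, which shows directly that any $\mathfrak p\in\bar Q^*(I,N)$ satisfies $(I+\Ann_R N)^m:_R\langle\mathfrak p\rangle\nsubseteq(\mathfrak p^k)_a^{\langle N\rangle}$ for all $m$, forcing $\mathfrak p\cap S=\emptyset$ whenever $(\mathrm{iv})$ holds.
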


The proof of Theorem 1.1 is given in Theorem 3.11. One of our tools for proving Theorem 1.1 is the following,
which is a characterization of the quintasymptotic prime ideals of $I$  with respect to $N$. In the following, we use $I_a^{\langle N \rangle}$ to denote the
union $I_a^{(N)}:_Rs$, where $s$ varies in $R\backslash \bigcup \{\frak p\in \mAss_RN/IN\}$;
in particular, for every integer $k\geq1$ and every prime ideal
$\mathfrak{p}$ of $R$,
\[
(\mathfrak{p}^k)^{\langle N \rangle}_a= \bigcup _{s\in R\backslash
\mathfrak{p}}((\mathfrak{p}^k)^{(N)}_a:_Rs).
\]

\begin{proposition} \label{3.1}
Let $R$ be a Noetherian ring and let $N$ be a  finitely
generated $R$-module. Let $I \subseteq \mathfrak{p}$
be ideals of $R$ such that $\mathfrak{p}\in \Supp(N).$ Then  $\mathfrak{p}\in \bar{Q}^*(I, N)$
if and only if  there exists an integer $k \geq 0$ such that for all
integers $m \geq 0$
\[
(I^m)^{(N)}_a :_R \langle \mathfrak{p}\rangle \nsubseteq
(\mathfrak{p}^k)^{\langle N \rangle}_a.
\]
\end{proposition}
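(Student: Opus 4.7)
The plan is to reduce the claim to an assertion in the complete local ring $(\hat R_\mathfrak{p},\hat{\mathfrak{m}})$ and then analyze it componentwise via the minimal associated primes of the completed module. First I would show both sides of the equivalence are preserved under localization at $\mathfrak{p}$: the module-relative integral closure $(I^m)^{(N)}_a$ commutes with localization because its defining equations $x^nN\subseteq \sum x^{n-i}I^iN$ are finitary, the saturation $J:_R \langle \mathfrak{p}\rangle = \bigcup_n J:_R \mathfrak{p}^n$ pulls back to $JR_\mathfrak{p}:_{R_\mathfrak{p}} (\mathfrak{p} R_\mathfrak{p})^\infty$, and $(\mathfrak{p}^k)_a^{\langle N\rangle}$ becomes $(\mathfrak{p}^kR_\mathfrak{p})^{(N_\mathfrak{p})}_a$ (the extra saturation being trivial in a local ring). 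So we may assume $(R,\mathfrak{m})$ is local with $\mathfrak{p}=\mathfrak{m}$. Passage to the completion $\hat R$ is faithfully flat and preserves the non-containment question; the hypothesis $\mathfrak{p}\in\bar{Q}^*(I,N)$ then becomes, by definition, the existence of $\mathfrak{q}\in\mAss\hat N$ with $\Rad(I\hat R + \mathfrak{q}) = \hat{\mathfrak{m}}$.

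For the direction $(\Rightarrow)$, fix such a $\mathfrak{q}$ and an element $x\in\hat N$ with $\Ann_{\hat R}(x)=\mathfrak{q}$, which exists since $\mathfrak{q}\in\Ass\hat N$. Because $I\hat R+\mathfrak{q}$ is $\hat{\mathfrak{m}}$-primary, $\hat{\mathfrak{m}}^L\subseteq I\hat R+\mathfrak{q}$ for some $L\geq1$, whence $\hat{\mathfrak{m}}^{mL}\cdot x\subseteq I^m\hat N$ for every $m$. From this one sees that every $r\in\hat{\mathfrak{m}}^{mL}$ satisfies an integral dependence relation on $I^m$ modulo $\mathfrak{q}$, placing it into $(I^m)_a^{(\hat N)}:_{\hat R}\hat{\mathfrak{m}}^\infty$. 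Combining with Krull's intersection theorem on $\hat R/\mathfrak{q}$ (which forces $\bigcap_k (\hat{\mathfrak{m}}^k)_a^{(\hat N)}\subseteq \mathfrak{q}$), I would then pin down a fixed $k$ and a uniform witness element $r_0\in\hat R$ lying in $(I^m)_a^{(\hat N)}:_{\hat R}\hat{\mathfrak{m}}^\infty$ for every $m$ but outside $(\hat{\mathfrak{m}}^k)_a^{(\hat N)}$; descending back to $R$ yields the desired $k$.

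For the converse $(\Leftarrow)$, assume no $\mathfrak{q}\in\mAss\hat N$ satisfies $\Rad(I\hat R+\mathfrak{q})=\hat{\mathfrak{m}}$, so $\dim\hat R/(I\hat R + \mathfrak{q}) \geq 1$ for every such $\mathfrak{q}$. Using the natural componentwise inclusion $(I^m)_a^{(\hat N)}\subseteq \bigcap_{\mathfrak{q}\in\mAss\hat N}(I^m+\mathfrak{q})_a$, obtained by reducing the integral dependence equations modulo each $\mathfrak{q}$ on a cyclic submodule of $\hat N$ annihilated by $\mathfrak{q}$, together with Artin--Rees in each quotient $\hat R/\mathfrak{q}$, one shows that for every prescribed $k$ there is an $m(k)$ such that $(I^m)_a^{(\hat N)}:_{\hat R}\hat{\mathfrak{m}}^\infty \subseteq (\hat{\mathfrak{m}}^k)_a^{(\hat N)}$ for all $m\geq m(k)$. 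Hence no $k$ can serve as the required witness.

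The principal obstacle is the forward direction, namely constructing a single witness element $r_0$ that is simultaneously in the $\hat{\mathfrak{m}}$-torsion saturation of $(I^m)_a^{(\hat N)}$ for every $m$ while remaining outside a fixed $(\hat{\mathfrak{m}}^k)_a^{(\hat N)}$. This requires a delicate interplay between the annihilator $\mathfrak{q}$ of the chosen $x\in\hat N$, a Briançon--Skoda-type bound for the $\hat{\mathfrak{m}}$-primary ideal $I+\mathfrak{q}$ in $\hat R/\mathfrak{q}$, and the passage between $\hat N$-relative integral closures and their componentwise reductions.
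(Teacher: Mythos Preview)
Your reduction to the complete local case and the componentwise viewpoint via $\mAss\hat N$ are exactly the framework the paper uses. However, in both directions you have identified the wrong engine.

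For $(\Rightarrow)$, the passage from $\hat{\mathfrak m}^{mL}x\subseteq I^m\hat N$ to ``$r\in(I^m)_a^{(\hat N)}:\hat{\mathfrak m}^\infty$'' does not go through, and no Brian\c con--Skoda input is needed. The correct witness is much simpler: choose $r_0\in\bigl(\bigcap_{\mathfrak q'\neq\mathfrak q}\mathfrak q'\bigr)\setminus\mathfrak q$, which exists by prime avoidance among the minimal primes of $\Ann_{\hat R}\hat N$. Using Lemma~2.1(iii) one checks that $(I^m)_a^{(\hat N)}=\bigcap_{\mathfrak q'}f_{\mathfrak q'}^{-1}\bigl(((I^m+\mathfrak q')/\mathfrak q')_a\bigr)$, so for $\mathfrak q'\neq\mathfrak q$ the element $r_0$ already maps to $0$, while for $\mathfrak q'=\mathfrak q$ the ideal $(I^m+\mathfrak q)/\mathfrak q$ is $\hat{\mathfrak m}/\mathfrak q$-primary and hence $\hat{\mathfrak m}^{n(m)}r_0$ lands inside it. Thus $r_0\in\bigcap_m\bigl((I^m)_a^{(\hat N)}:\langle\hat{\mathfrak m}\rangle\bigr)$, and $r_0\notin\mathfrak q$ together with Lemma~2.1(v) gives a $k$ with $r_0\notin(\hat{\mathfrak m}^k)_a^{(\hat N)}$. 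This is precisely the construction the paper packages as Proposition~3.3; the element comes from a primary decomposition of $\Ann\hat N$, not from analytic bounds.

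For $(\Leftarrow)$, Artin--Rees is not enough. One first shows (as in Corollary~3.7) that the hypothesis forces $\bigcap_m\bigl((I^m)_a^{(\hat N)}:\langle\hat{\mathfrak m}\rangle\bigr)=\Rad(\Ann\hat N)$; the filtration $\{(I^m)_a^{(\hat N)}:\langle\hat{\mathfrak m}\rangle\}$ is not $I$-stable, so Artin--Rees does not convert this into the cofinality statement you need. What is required is Chevalley's theorem in the complete ring $\hat R/\Rad(\Ann\hat N)$, which says that any decreasing filtration with trivial intersection is finer than the $\hat{\mathfrak m}$-adic one; this is exactly how the paper proceeds (Theorem~3.9 and Proposition~3.8).
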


 Finally in this section we derive the following consequence of Theorem 1.1.
\begin{corollary} \label{3.3}
Let $R$ be a Noetherian ring,  $N$  a  finitely
generated $R$-module and $I$ an ideal of $R.$ Then the following
conditions are equivalent:
\begin{itemize}
\item[(i)] $\bar{Q}^*(I, N)= \mAss_R N/IN.$

\item[(ii)] The topologies defined by $\{(I^n)^{(N)}_a \}_{n \geq
0}$ and $\{(I^n)^{\langle N \rangle}_a \}_{n \geq 0}$ are equivalent.
\end{itemize}
\end{corollary}

For any ideal $I$ of $R$ and any $R$-module $N$, the i-th {\it
local cohomology module of $N$ with respect to} $I$ is defined by
\[
H^i_I(N):= \varinjlim \Ext^i_R(R/I^n,N).
\]
We refer the reader to \cite{BS} for basic properties of local cohomology modules.
The purpose of the fourth section is to characterize  the equivalence between the topologies defined by $(I^n)_a^{(N)}$ and
$S((I^n)_a^{(N)}), \, {n\geq1}$  in terms of the top local cohomology module $H^{\dim N}_I(N)$. This will
generalize the main result of Marti-Farre \cite{MF}, as an extension of the main results of Call \cite[Corollary 1.4]{Ca},
 Call-Sharp \cite{CS} and Schenzel \cite[Corollary  4.3]{Sc2}.
\begin{theorem}
 If $(R, \mathfrak{m})$ is a  local (Noetherian)  ring and $N$ a finitely generated
quasi-unmixed $R$-module of dimension $d$, then $H^d_I(N)= 0$ if
and only if there exists a multiplicatively closed subset $S$ of
$R$ such that $\mathfrak{m} \cap S \neq \emptyset$ and the
topologies induced by $(I^n)_a^{(N)}$ and
$S((I^n)_a^{(N)}), \, {n\geq1}$ are equivalent.
\end{theorem}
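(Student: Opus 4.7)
The plan is to reduce both sides of the equivalence to the single combinatorial statement $\mathfrak{m}\notin\bar{Q}^*(I,N)$, using Theorem~1.1 on the topological side and the module version of the Lichtenbaum--Hartshorne vanishing theorem on the local cohomology side.

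First I would apply Theorem~1.1 (i.e.\ Theorem~3.11): for a multiplicatively closed set $S\subseteq R$, the topologies induced by $(I^n)_a^{(N)}$ and $S((I^n)_a^{(N)})$ are equivalent if and only if $S\cap\mathfrak{p}=\emptyset$ for every $\mathfrak{p}\in\bar{Q}^*(I,N)$. Thus the existence of an $S$ with $\mathfrak{m}\cap S\neq\emptyset$ satisfying the topological equivalence is equivalent to the existence of some $s\in\mathfrak{m}$ avoiding every $\mathfrak{p}\in\bar{Q}^*(I,N)$. Since $\bar{Q}^*(I,N)$ is finite (McAdam's fact, extended to modules by Ahn), prime avoidance gives: such $s$ exists iff $\mathfrak{m}\not\subseteq\bigcup_{\mathfrak{p}\in\bar{Q}^*(I,N)}\mathfrak{p}$, and since $\mathfrak{m}$ is maximal this simplifies to $\mathfrak{m}\notin\bar{Q}^*(I,N)$.

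Next I would translate the vanishing $H^d_I(N)=0$ into the same condition. By the module version of the Lichtenbaum--Hartshorne theorem (see Brodmann--Sharp \cite{BS}), for a finitely generated $N$ of dimension $d$ over the local ring $(R,\mathfrak{m})$ one has $H^d_I(N)=0$ if and only if $\dim\hat{R}/(I\hat{R}+\mathfrak{q})\geq 1$ for every $\mathfrak{q}\in\Ass_{\hat{R}}\hat{N}$ with $\dim\hat{R}/\mathfrak{q}=d$. The quasi-unmixed hypothesis forces the identification
\[
\{\mathfrak{q}\in\Ass_{\hat{R}}\hat{N}:\dim\hat{R}/\mathfrak{q}=d\}=\mAss_{\hat{R}}\hat{N},
\]
because every minimal prime of $\hat{N}$ has dimension $d$ (quasi-unmixedness) and, conversely, any $\mathfrak{q}\in\Ass_{\hat{R}}\hat{N}$ of dimension $d$ must be minimal (any strictly smaller associated prime would have dimension $>d$, exceeding $\dim\hat{N}=d$). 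Consequently, $H^d_I(N)=0$ iff no $\mathfrak{q}\in\mAss_{\hat{R}}\hat{N}$ satisfies $\Rad(I\hat{R}+\mathfrak{q})=\mathfrak{m}\hat{R}$, which by definition of quintasymptotic primes is exactly $\mathfrak{m}\notin\bar{Q}^*(I,N)$. Chaining the two chains of equivalences yields the theorem.

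The main obstacle is the correct deployment of the quasi-unmixed hypothesis to obtain the clean identification of $\mAss_{\hat{R}}\hat{N}$ with the top-dimensional associated primes of $\hat{N}$, together with locating the module version of Lichtenbaum--Hartshorne in the right generality; once both are in place, the proof is a formal concatenation of Theorem~1.1, prime avoidance, and the vanishing theorem.
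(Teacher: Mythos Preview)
Your proposal is correct and follows essentially the same route as the paper's proof of Theorem~4.1: both reduce the topological condition via Theorem~1.1 (Theorem~3.11) to $\mathfrak{m}\notin\bar{Q}^*(I,N)$, and then match this with the vanishing $H^d_I(N)=0$ through the module version of the Lichtenbaum--Hartshorne theorem (the paper cites \cite[Corollary~3.4]{DS} rather than \cite{BS}). The only cosmetic difference is that the paper constructs the witness set directly as $S=R\setminus\bigcup\{\mathfrak{p}\in\bar{Q}^*(I,N)\}$ rather than invoking prime avoidance, and it separates the two implications so that quasi-unmixedness is visibly used only for $H^d_I(N)=0\Rightarrow\mathfrak{m}\notin\bar{Q}^*(I,N)$.
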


The result in Theorem 1.4 is proved in Theorem  4.1.  Pursing this point of view further we show that the support of the
$(d-1)$-th local cohomology module of a finitely generated $R$-module $N$  is always
finite ($d= \dim N$), which will be the strengthened and a generalized
version of a corresponding one by  Marley (\cite[Corollaries 2.4 and 2.5]{M}) and by Naghipour-Sedghi (\cite[Corollary 3.3]{NS}).
\begin{theorem}
Assume that $R$ is a Noetherian ring. Let $N$ be a
finitely generated $R$-module of dimension $d$ and $I$
an ideal of $R.$ Then $\Supp(H^d_I(N)) \subseteq \bar{Q}^*(I, N).$ Moreover, if
$(R, \frak m)$ is   local,  then
 $$\Supp(H^{d-1}_I(N)) \subseteq \bar{Q}^*(I,N) \cup \{\mathfrak{m}\}.$$
\end{theorem}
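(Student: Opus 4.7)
The plan is to reduce both inclusions to the Lichtenbaum--Hartshorne vanishing theorem in module form (as in Brodmann--Sharp, \emph{Local Cohomology}, Theorem~8.2.1).

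For the first inclusion, take $\mathfrak{p}\in\Supp(H^d_I(N))$. Localising gives $H^d_{IR_{\mathfrak{p}}}(N_{\mathfrak{p}})\neq 0$, so Grothendieck's vanishing yields $\dim N_{\mathfrak{p}}\geq d$; together with the trivial bound $\dim N_{\mathfrak{p}}\leq\dim N=d$ this forces $\dim N_{\mathfrak{p}}=d$. Faithfully flat base change to the completion gives $H^d_{I\hat{R}_{\mathfrak{p}}}(\hat{N}_{\mathfrak{p}})\neq 0$ with $\dim\hat{N}_{\mathfrak{p}}=d$. Applying Lichtenbaum--Hartshorne to the complete local ring $(\hat{R}_{\mathfrak{p}},\mathfrak{p}\hat{R}_{\mathfrak{p}})$, the module $\hat{N}_{\mathfrak{p}}$, and the ideal $I\hat{R}_{\mathfrak{p}}$ produces a prime $\mathfrak{q}\in\Ass_{\hat{R}_{\mathfrak{p}}}\hat{N}_{\mathfrak{p}}$ with $\dim\hat{R}_{\mathfrak{p}}/\mathfrak{q}=d$ and $\Rad(I\hat{R}_{\mathfrak{p}}+\mathfrak{q})=\mathfrak{p}\hat{R}_{\mathfrak{p}}$. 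Since $\mathfrak{q}$ attains the full dimension of $\hat{N}_{\mathfrak{p}}$, it is necessarily minimal in $\Ass_{\hat{R}_{\mathfrak{p}}}\hat{N}_{\mathfrak{p}}$, i.e.\ $\mathfrak{q}\in\mAss_{\hat{R}_{\mathfrak{p}}}\hat{N}_{\mathfrak{p}}$, which is precisely the condition defining $\mathfrak{p}\in\bar{Q}^*(I,N)$.

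For the second inclusion, let $(R,\mathfrak{m})$ be local and take $\mathfrak{p}\in\Supp(H^{d-1}_I(N))$ with $\mathfrak{p}\neq\mathfrak{m}$. Localisation and Grothendieck vanishing give $\dim N_{\mathfrak{p}}\in\{d-1,d\}$. The crucial observation is that $\dim N_{\mathfrak{p}}=d$ is impossible: it would yield a chain of primes of length $d$ in $\Supp N$ ending at $\mathfrak{p}$ (obtained by contracting a maximal chain in $R_{\mathfrak{p}}/\Ann N_{\mathfrak{p}}$), and because $R$ is local and $N\neq 0$ we have $\mathfrak{m}\in\Supp N$, so appending $\mathfrak{p}\subsetneq\mathfrak{m}$ would produce a chain of length $d+1$ in $\Supp N$, contradicting $\dim N=d$. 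Hence $\dim N_{\mathfrak{p}}=d-1$, and the first part of the theorem, applied to the triple $(R_{\mathfrak{p}},IR_{\mathfrak{p}},N_{\mathfrak{p}})$ with module of dimension $d-1$, yields $\mathfrak{p}R_{\mathfrak{p}}\in\bar{Q}^*(IR_{\mathfrak{p}},N_{\mathfrak{p}})$. Since the completion of $R_{\mathfrak{p}}$ at $\mathfrak{p}R_{\mathfrak{p}}$ is $\hat{R}_{\mathfrak{p}}$ and similarly $\widehat{(N_{\mathfrak{p}})}=\hat{N}_{\mathfrak{p}}$, this translates directly into $\mathfrak{p}\in\bar{Q}^*(I,N)$.

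The main obstacle is locating and invoking the correct module-theoretic form of the Lichtenbaum--Hartshorne vanishing theorem: it is this result that simultaneously delivers the minimal associated prime $\mathfrak{q}$ and the defining radical equality in one stroke. The remaining steps --- Grothendieck vanishing, verifying that maximality of $\dim\hat{R}_{\mathfrak{p}}/\mathfrak{q}$ forces membership in $\mAss$, the chain-of-primes dimension count ruling out $\dim N_{\mathfrak{p}}=d$, and the passage between $R_{\mathfrak{p}}$ and $\hat{R}_{\mathfrak{p}}$ --- are routine.
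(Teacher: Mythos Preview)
Your proof is correct and follows essentially the same route as the paper: localize, invoke the module form of Lichtenbaum--Hartshorne to produce the requisite prime $\mathfrak{q}$, and for the second inclusion rule out $\dim N_{\mathfrak{p}}=d$ to reduce to the first. The paper cites the Divaani-Aazar--Schenzel version of Lichtenbaum--Hartshorne (which yields $\mathfrak{q}\in\mAss$ directly) rather than your Brodmann--Sharp formulation plus the dimension-forces-minimality step, and it is terser about why $\dim N_{\mathfrak{p}}=d-1$ when $\mathfrak{p}\neq\mathfrak{m}$, but the substance is identical.
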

The proof of Theorem 1.5 is given in Corollaries 4.2 and 4.3.\\

Throughout the paper, all rings are commutative, with
identity, unless otherwise specified. We shall use $R$ to denote
such a ring, $I$ an ideal of $R$, and $N$ a non-zero module over
$R.$ If $(R, \mathfrak{m})$ is a Noetherian local ring and $N$ a
finitely generated $R$-module, then $\hat{R}$ (resp. $\hat{N}$)
denotes the completion of $R$ (resp. $N$) with respect to the
$\mathfrak{m}$-adic topology.  Then $N$  is
said to be {\it quasi-unmixed } if for every $\mathfrak{p}\in
\mAss_{\hat{R}}\hat{N}$, the condition $\dim\hat{R}/\mathfrak{p}=
\dim N$ is satisfied.
For any ideal $J$ of $R$, the
radical of $J$, denoted by $\Rad (J)$, is defined to be the set
$\{ a\in R:\, a^n\in J \text{ for some }n \in \mathbb{N}\}.$
Moreover we use $V(J)$ to denote the set of prime ideals of $R$
containing $J.$ Finally, for any $R$-module $L$ we shall use
$\mAss_RL$ to denote the set of minimal elements of $\Ass_RL.$ For
any unexplained notation or terminology we refer the reader to
\cite{Ma} and \cite{Na}.

\section{Asymptotic behaviour of integral closures of ideals}
The purpose of  this section is to study the asymptotic behaviour  of the  integral closure of
ideals  with respect to a finitely generated module $N$ over a Noetherian ring $R$. More precisely we show that the sequences
\[
\{\Ass_R R/(I^n)^{(N)}_a\}_{n\geq1},\,\,   \{\Ass _R (I^n)_a^{(N)}/ (I^{n+1})^{(N)}_a\}_{n\geq1}, \,\,\,   \{\Ass _R (I^n)_a^{(N)}/ ((I+\Ann_RN)^n)_a\}_{n\geq1},
\]
of associated  prime ideals, are ultimately constant;  and pursing this point of view further we  show that 
 $A^*_a(I+ \Ann_R N)\setminus C^*_a(I, N) \subseteq A^*_a(I, N).$

\begin{lemma} \label{2.1}
Let $R$ be a ring (not necessarily Noetherian), and  $N$  a
Noetherian $R$-module. Then for any ideal $I$ of $R$, the
following statements hold:
\begin{itemize}
\item[(i)] $I_a + {\rm Ann}_RN \subseteq I^{(N)}_a.$

\item[(ii)] $0^{(N)}_a= \Rad( \Ann_RN).$

\item[(iii)] $I^{(N)}_a/\Ann_RN = (I+ \Ann_RN / \Ann_RN)_a;$ so
that $\Rad (I^{(N)}_a) = \Rad (I+ \Ann_RN).$

\item[(iv)] For any multiplicatively closed subset  $S$ of $R$,
 $(S^{-1}I)^{(S^{-1}N)}_a = S^{-1}(I^{(N)}_a).$

\item[(v)] $\bigcap_{n\geq1}(I^n)^{(N)}_a= \bigcap\{\mathfrak{p}\in
\mAss_RN\,|\, \mathfrak{p}+ I\neq R\}.$ In particular, if $R$ is local  then  $$\bigcap_{n\geq1}(I^n)^{(N)}_a=\Rad( \Ann_RN).$$

\item[(vi)]  $(I^{(N)}_a J^{(N)}_a)^{(N)}_a = (IJ)^{(N)}_a$, where $J$ is a second ideal of $R$.
\end{itemize}
\end{lemma}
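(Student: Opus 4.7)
The plan is to handle the six assertions by reducing parts (iii)--(vi) to classical facts about integral closures in the quotient ring $\bar R:=R/\Ann_RN$, over which $N$ becomes a faithful finitely generated module.

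Parts (i) and (ii) follow directly from the definition. For (i), if $x\in I_a$ satisfies $x^n+r_1x^{n-1}+\cdots+r_n=0$ with $r_i\in I^i$, multiplying by any $y\in N$ yields $x^nN\subseteq\sum_{i=1}^n x^{n-i}I^iN$; if $x\in\Ann_RN$, then $xN=0\subseteq IN$, so $n=1$ works. For (ii) specialise $I=0$: $x\in 0_a^{(N)}$ iff $x^nN=0$ for some $n\geq 1$ iff $x\in\Rad(\Ann_RN)$.

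Part (iii) is the core of the argument. Since $\pi:R\to\bar R$ is surjective and $\Ann_RN\subseteq I_a^{(N)}$ by (i), the task reduces to showing $\bar I_a^{(N)}=\bar I_a$ in $\bar R$, where $\bar I=(I+\Ann_RN)/\Ann_RN$. The inclusion $\bar I_a\subseteq\bar I_a^{(N)}$ follows from (i) applied in $\bar R$. For the reverse inclusion the standard Cayley--Hamilton determinant trick applies: picking generators $y_1,\dots,y_m$ of $N$ and rewriting $\bar x^n y_j=\sum_i\bar x^{n-i}\sum_k a_{ijk}y_k$ with $a_{ijk}\in\bar I^i$, the $m\times m$ matrix $M_{jk}:=\bar x^n\delta_{jk}-\sum_i\bar x^{n-i}a_{ijk}$ annihilates the column $(y_1,\dots,y_m)^t$, so by faithfulness $\det M=0$, and the expansion $\bar x^{mn}+\sum_{k=1}^{mn}b_k\bar x^{mn-k}=0$ with $b_k\in\bar I^k$ is the desired integral equation. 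The radical identity is then immediate from $\Rad(\bar I_a)=\Rad(\bar I)$.

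Part (iv) is a routine denominator-clearing argument: one reduces to the case $s=1$ (by absorbing $s^n$), extracts a single $t\in S$ using that $N$ is finitely generated so that $tx^nN\subseteq\sum_{i=1}^n x^{n-i}I^iN$, and verifies $tx\in I_a^{(N)}$, giving $x/s\in S^{-1}(I_a^{(N)})$. Part (v) uses (iii) to rewrite $\bigcap_n(I^n)_a^{(N)}=\pi^{-1}\!\bigl(\bigcap_n(\bar I^n)_a\bigr)$ and then invokes Ratliff's classical theorem, which identifies $\bigcap_n(\bar I^n)_a$ with $\bigcap\{\bar{\mathfrak q}\in\Min(\bar R):\bar I+\bar{\mathfrak q}\neq\bar R\}$; the bijection $\mAss_RN\leftrightarrow\Min(\bar R)$ induced by $\pi$ yields the stated identity. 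In the local case every $\mathfrak p\in\mAss_RN$ is contained in $\mathfrak m$, so whenever $I\subseteq\mathfrak m$ the condition $\mathfrak p+I\neq R$ is automatic and the right-hand side collapses to $\bigcap\mAss_RN=\Rad(\Ann_RN)$. Part (vi) reduces via (iii) to the standard persistence property $(\bar I_a\bar J_a)_a=(\bar I\bar J)_a$ in $\bar R$, which is a consequence of the valuative description of classical integral closure. The main obstacle is part (v): everything else is either definitional bookkeeping or a formal reduction to $\bar R$, whereas the intersection formula rests on Ratliff's nontrivial theorem about $\bigcap_n(J^n)_a$ and must be invoked as an external input.
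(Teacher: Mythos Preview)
Your proof is correct and follows the same strategy as the paper: reduce parts (iii)--(vi) to classical integral-closure facts in $\bar R = R/\Ann_R N$, with (iii) as the pivot. Where the paper simply cites \cite{STY}, \cite{S}, and \cite{Mc1} for (iii), (iv), and (v), you supply self-contained arguments (Cayley--Hamilton for (iii), direct denominator-clearing for (iv)); the only minor structural difference is that the paper derives (iv) from (iii) together with the classical localization lemma for $(\,\cdot\,)_a$, rather than arguing directly as you do.
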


\begin{proof} (i) and (ii) follow from the definition. For
the proof of (iii) see \cite[Remark 1.6]{STY}.  (iv) follows from (iii) and \cite[Lemma 2.3]{S}. To show (v) use (iii) and
\cite[Lemma 3.11]{Mc1}. Finally, in order to prove (vi), use (iii) and
the fact that $(KL)_a = (K_aL_a)_a$ for all ideals $K$ and $L$ of $R.$
\end{proof}

The following corollary extends McAdam's result \cite[Lemma 1.4]{Mc2}.
\begin{corollary} \label{2.6}
Let $R$  and  $N$ be as in Lemma {\rm2.1}.  Let $I$ be an integrally closed ideal
with respect to $N.$ Then  $I$ has a primary decomposition each primary component of which is integrally closed with respect to $N.$
\end{corollary}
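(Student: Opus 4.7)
The plan is to reduce to the classical (Noetherian) result of McAdam, \cite[Lemma 1.4]{Mc2}, by passing to the quotient ring $R/\Ann_R N$. Since $I$ is integrally closed with respect to $N$, Lemma 2.1(i) gives $\Ann_R N \subseteq I$, so writing $\bar R := R/\Ann_R N$ and $\bar I := I/\Ann_R N$, part (iii) of the same lemma shows that $\bar I = (\bar I)_a$, i.e.\ $\bar I$ is an integrally closed ideal of $\bar R$ in the classical sense.

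Before invoking McAdam's result, I need to observe that $\bar R$ is Noetherian. This does not require $R$ itself to be Noetherian: since $N$ is a Noetherian $R$-module, it is a Noetherian $\bar R$-module, and it is faithful over $\bar R$. A standard consequence of the Eakin--Nagata theorem then implies that $\bar R$ is Noetherian. With this in place, \cite[Lemma 1.4]{Mc2} applies to $\bar I \subseteq \bar R$ and yields a primary decomposition
\[
\bar I = \bigcap_{j=1}^r \bar Q_j
\]
in $\bar R$, in which each $\bar Q_j$ is a primary ideal of $\bar R$ with $\bar Q_j = (\bar Q_j)_a$.

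Lifting back through the canonical surjection $R \twoheadrightarrow \bar R$, set $Q_j$ to be the preimage of $\bar Q_j$. Then each $Q_j \supseteq \Ann_R N$ is a primary ideal of $R$ (primary decomposition is compatible with the quotient by an ideal contained in $I$), and $I = \bigcap_{j=1}^r Q_j$. Applying Lemma 2.1(iii) to each $Q_j$ in the reverse direction gives $Q_j = (Q_j)_a^{(N)}$, so every primary component is integrally closed with respect to $N$, as desired.

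The only step that is not a direct translation through Lemma 2.1(iii) is the verification that $\bar R$ is Noetherian; once that is settled, the argument is a routine descent--ascent through the faithful quotient $\bar R$, and the Noetherian hypothesis in McAdam's lemma does the real work. I therefore expect that subtlety (the Eakin--Nagata appeal) to be the only genuinely non-formal input in the proof.
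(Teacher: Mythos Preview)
Your proof is correct and follows exactly the route the paper intends: pass to $\bar R=R/\Ann_RN$ via Lemma~2.1(iii), apply McAdam's \cite[Lemma~1.4]{Mc2} to the integrally closed ideal $\bar I$, and pull the primary components back. The paper records this as a one-line appeal to Lemma~2.1 and \cite[Lemma~1.4]{Mc2}; your only extra ingredient is the justification that $\bar R$ is Noetherian, which the paper uses elsewhere without comment (and which follows more simply than full Eakin--Nagata: a faithful Noetherian module $N$ with generators $m_1,\dots,m_n$ gives an embedding $\bar R\hookrightarrow N^n$).
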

\begin{proof}
The result follows from Lemma \ref{2.1} and \cite[Lemma  1.4]{Mc2}.
\end{proof}

\begin{lemma} \label{2.2} Let $R$ be a ring (not necessarily Noetherian), and  $N$  a
Noetherian $R$-module.  If $I$ is an ideal of $R$ such that  is not contained in any of the minimal prime ideals of $\Ann_RN$, then
\begin{align*}
((I^n)^{(N)}_a :_R I^m )&= ((I^n)^{(N)}_a :_R (I^m)_a) \\
                        &=((I^n)^{(N)}_a :_R (I^m)^{(N)}_a)\\
                        & = (I^{n-m})^{(N)}_a
\end{align*}
for all integers $n\,\geq m\,\geq \,0.$

\end{lemma}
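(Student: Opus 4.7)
The plan is to reduce the relative colon identity to a classical one in a Noetherian ring via the quotient $\bar R := R/\Ann_R N$. By Lemma~\ref{2.1}(iii), every ideal of the form $J^{(N)}_a$ satisfies $J^{(N)}_a/\Ann_R N = (\bar J)_a$, where $\bar J$ denotes the image of $J+\Ann_R N$ in $\bar R$. All four ideals in the claimed equality contain $\Ann_R N$ (the three colon ideals inherit this from their dividend $(I^n)^{(N)}_a$), so they descend to well-defined ideals of $\bar R$. Since $N$ is a faithful Noetherian $\bar R$-module, Formanek's theorem forces $\bar R$ itself to be Noetherian, and the hypothesis that $I$ meets no minimal prime of $\Ann_R N$ becomes the condition that $\bar I$ meets no minimal prime of $\bar R$.

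A minor subtlety is that the classical closure $(I^m)_a$ need not contain $\Ann_R N$; however, Lemma~\ref{2.1}(i) gives $(I^m)_a \subseteq (I^m)^{(N)}_a$, so its image in $\bar R$ is squeezed between $\bar I^m$ and $(\bar I^m)_a$. By monotonicity of the colon in its denominator, the four-fold equality in $R$ is therefore sandwiched between, and so equivalent to, the two-fold equality
\[
(\bar I^n)_a :_{\bar R} \bar I^m \;=\; (\bar I^n)_a :_{\bar R} (\bar I^m)_a \;=\; (\bar I^{n-m})_a
\]
in $\bar R$, which is the only thing that remains to be proven.

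Here the inclusions $(\bar I^{n-m})_a \subseteq (\bar I^n)_a :_{\bar R} (\bar I^m)_a \subseteq (\bar I^n)_a :_{\bar R} \bar I^m$ are immediate from the product rule $(\bar I^{n-m})_a (\bar I^m)_a \subseteq (\bar I^n)_a$ (the absolute case of Lemma~\ref{2.1}(vi)) and the containment $\bar I^m \subseteq (\bar I^m)_a$. The decisive step is the reverse inclusion $(\bar I^n)_a :_{\bar R} \bar I^m \subseteq (\bar I^{n-m})_a$. For this I would invoke the Rees-valuation criterion: if $x \bar I^m \subseteq (\bar I^n)_a$ and $v$ is any Rees valuation of $\bar I$ normalized so that $v(\bar I) = 1$, then there exists $y \in \bar I^m$ realizing $v(y) = m$, and $xy \in (\bar I^n)_a$ forces $v(x) + m \geq n$, whence $v(x) \geq n - m$; the valuative criterion then places $x$ in $(\bar I^{n-m})_a$.

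The main obstacle is precisely this last step: the Rees-valuation (or equivalent graded integral-closure) machinery requires $\bar R$ to be Noetherian, which is why the initial descent via Formanek's theorem is indispensable for the non-Noetherian $R$ case and why the hypothesis that $\bar I$ avoids every minimal prime of $\bar R$ is used (ensuring finitely many well-defined Rees valuations). Once these ingredients are in place, the rest of the argument is standard bookkeeping through Lemma~\ref{2.1}.
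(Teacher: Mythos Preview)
Your proof is correct and follows essentially the same route as the paper: both reduce via Lemma~\ref{2.1}(iii) to the quotient $\bar R = R/\Ann_R N$ (which is Noetherian because $N$ is a faithful Noetherian $\bar R$-module) and then invoke the classical colon identity $(\bar I^n)_a :_{\bar R} \bar I^m = (\bar I^{n-m})_a$ for an ideal avoiding the minimal primes. The only difference is that the paper cites this classical identity as \cite[Lemma~11.27]{Mc1}, whereas you supply a self-contained proof via Rees valuations; your explicit treatment of the Noetherianity of $\bar R$ and of the squeeze for $(I^m)_a$ is also more detailed than the paper's sketch.
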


\begin{proof} In view of Lemma \ref{2.1} it is enough to show that
\[
((I^n)^{(N)}_a :_R I^m) \subseteq (I^{n-m})^{(N)}_a.
\]
To this end, let $x\in R$ be such that $I^m x \subseteq
(I^n)^{(N)}_a.$ To simplify notation let us denote the
Noetherian ring $R/{\rm Ann}_RN$ by $\widetilde{R};$ the natural
image of $x$ in $\widetilde{R}$ is denoted by $\widetilde{x};$  and for
each ideal $J$ of $R$ write $\widetilde{J}$ for the ideal $J+
\Ann_RN/\Ann_RN.$ Then by Lemma \ref{2.1}
\[
\widetilde{x} \in ((\widetilde{I}^n)_a :
  _{\widetilde{R}} (\widetilde{I})^m).
\]
Now, it follows  from \cite[Lemma 11.27]{Mc1} that $x\in
(I^{n-m})^{(N)}_a,$ as desired.
\end{proof}

We are now ready to state and prove the main results of this
section. Namely, we prove that the sequences of associated primes
$\{\Ass_R R/(I^n)^{(N)}_a\}_{n\geq1}$,
$\{\Ass_R(I^n)^{(N)}_a/(I^{n+1})^{(N)}_a\}_{n\geq1}$  and  $\{\Ass_R(I^n)^{(N)}_a/((I+\Ann_RN)^n)_a\}_{n\geq 1}$
are increasing and become eventually constant.

\begin{theorem} \label{2.3}
Let $I$ denote an ideal of a ring $R$ (not necessarily Noetherian),   and let $N$ be a
Noetherian $R$-module. Then the sequence  of associated
primes
\[
\Ass_R R/(I^n)^{(N)}_a ,  n= 1,2, \ldots,
\]
is increasing and ultimately constant.
Moreover, if $I$   is not contained in any of the minimal prime ideals of $\Ann_RN$, then the sequence
\[
\Ass_R (I^n)^{(N)}_a/
(I^{n+1})^{(N)}_a,  n= 1,2, \ldots,
\]
is also  increasing and eventually constant.
\end{theorem}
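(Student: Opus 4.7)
The plan is to reduce both assertions to Ratliff's classical theorem for integral closures in a Noetherian ring (\cite{R}), by passing to the quotient ring $\widetilde{R} := R/\Ann_R N$. Although $R$ itself need not be Noetherian, $\widetilde{R}$ is: if $n_1, \ldots, n_k$ generate $N$, then $\widetilde{R}$ embeds as an $\widetilde{R}$-submodule of the Noetherian $\widetilde{R}$-module $N^k$ via $\bar{r} \mapsto (rn_1, \ldots, rn_k)$, so $\widetilde{R}$ is a Noetherian ring (this is the fact tacitly used in the proof of Lemma 2.2). Put $\widetilde{I} := (I + \Ann_R N)/\Ann_R N$; the identity $I^n + \Ann_R N = (I + \Ann_R N)^n + \Ann_R N$ combined with Lemma 2.1(iii) yields
\[
(I^n)_a^{(N)}/\Ann_R N = (\widetilde{I}^n)_a \quad \text{for every } n \geq 1.
\]

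In particular $\Ann_R N \subseteq (I^n)_a^{(N)}$ for all $n$, and the canonical order-preserving bijection between prime ideals of $R$ containing $\Ann_R N$ and prime ideals of $\widetilde{R}$ induces
\[
\Ass_R R/(I^n)_a^{(N)} \longleftrightarrow \Ass_{\widetilde{R}} \widetilde{R}/(\widetilde{I}^n)_a,
\]
and, via the $R$-module isomorphism $(I^n)_a^{(N)}/(I^{n+1})_a^{(N)} \cong (\widetilde{I}^n)_a/(\widetilde{I}^{n+1})_a$, also
\[
\Ass_R (I^n)_a^{(N)}/(I^{n+1})_a^{(N)} \longleftrightarrow \Ass_{\widetilde{R}}(\widetilde{I}^n)_a/(\widetilde{I}^{n+1})_a.
\]
For the first assertion, apply Ratliff's classical theorem in $\widetilde{R}$ to the sequence $\Ass_{\widetilde{R}} \widetilde{R}/(\widetilde{I}^n)_a$ and transfer along the bijection.

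For the second assertion, observe that the hypothesis that $I$ lies in no minimal prime of $\Ann_R N$ translates precisely into the condition that $\widetilde{I}$ lies in no minimal prime of $\widetilde{R}$. This is the non-degeneracy hypothesis needed to invoke the consecutive-quotient variant of Ratliff's theorem in the Noetherian ring $\widetilde{R}$; the latter is proved by viewing $\bigoplus_{n \geq 0}(\widetilde{I}^n)_a/(\widetilde{I}^{n+1})_a$ as a finitely generated graded module over the integral closure of the Rees algebra $\widetilde{R}[\widetilde{I} t]$ in $\widetilde{R}[t]$ and invoking Brodmann's theorem on the eventual constancy of associated primes of graded components (cf. \cite{Mc1}). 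Locating and applying this classical variant is the principal technical obstacle; once available, transferring back via the second bijection completes the proof. The increasing property in each case is a standard companion to Ratliff's theorem and follows from the colon formula $((I^{n+1})_a^{(N)} :_R I) = (I^n)_a^{(N)}$ of Lemma 2.2 by a standard lifting of associated primes argument.
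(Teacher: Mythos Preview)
Your reduction to the Noetherian quotient $\widetilde{R}=R/\Ann_R N$ via Lemma~2.1(iii) and the appeal to Ratliff's theorem for the first sequence are exactly what the paper does.

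For the second sequence the paper takes a shorter path than you do. Rather than invoking a separate ``consecutive-quotient variant'' proved through the Rees algebra and Brodmann, the paper observes that the very colon-formula lifting you mention at the end already yields the \emph{equality}
\[
\Ass_R R/(I^{n+1})_a^{(N)} \;=\; \Ass_R (I^n)_a^{(N)}/(I^{n+1})_a^{(N)}
\]
for all $n\geq 0$: if $\mathfrak{p}=\bigl((I^{n+1})_a^{(N)}:_R x\bigr)$ with $x\notin (I^{n+1})_a^{(N)}$, then $I\subseteq\mathfrak{p}$ gives $Ix\subseteq (I^{n+1})_a^{(N)}$, so $x\in\bigl((I^{n+1})_a^{(N)}:_R I\bigr)=(I^n)_a^{(N)}$ by Lemma~2.2, and the reverse inclusion is trivial. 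This single equality reduces the second assertion entirely to the first---both ``increasing'' and ``eventually constant'' come for free.

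Your Rees-algebra route is thus an unnecessary detour, and it carries a genuine risk: the integral closure $\bigoplus_{n\geq 0}(\widetilde{I}^n)_a\,t^n$ of $\widetilde{R}[\widetilde{I}t]$ in $\widetilde{R}[t]$ need not be Noetherian for an arbitrary Noetherian ring $\widetilde{R}$, so Brodmann's theorem does not apply directly to the graded module you describe. You already hold the key that avoids this---just use your lifting argument to get the displayed equality rather than merely the increasing property.
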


\begin{proof} First assume that $n\geq 1$ is an integer and
$\mathfrak{p}\in \Spec R.$ In order to simplify notation, we will
use $\widetilde{R}$ to denote the commutative Noetherian ring
$R/\Ann_R N$  and for each ideal $J$ of $R$ we will write
$\widetilde{J}$ for the ideal $J+ \Ann_RN/\Ann_RN$ of
$\widetilde{R}.$ Then by Lemma \ref{2.1}, it is easy to see that
$\mathfrak{p}\in \Ass_R R/(I^n)^{(N)}_a$ if and only if
$\widetilde{\mathfrak{p}}\in \Ass_{\widetilde{R}} \, \widetilde{R}
/(\widetilde{I}^n)_a.$ Hence it follows that
\[
\bigcup _{n\geq 1}\Ass_R R/ (I^n)^{(N)}_a= \{\mathfrak{q}\cap R\,
|\, \mathfrak{q} \in A^*_a(\widetilde{I})\}.
\]
As, by Ratliff's Theorem \cite{R},  $A^*_a(\widetilde{I})$ is
finite it follows that the set $ \bigcup _{n\geq 1}\Ass_R R/
(I^n)^{(N)}_a$ is a finite set. Moreover, since the sequence
$\{\Ass_{\widetilde{R}} \, \widetilde{R} /(\widetilde{I}^n)_a\}_{n\geq 1}$
 is increasing, it turns out that the
sequence $\{\Ass_R R/(I^n)^{(N)}_a\}_{n\geq 1}$ is increasing, and
therefore ultimately constant.

In order to prove the second part, we
 assume that $I$ is not contained in any of the minimal prime ideals of $\Ann_RN$. Suppose that $\mathfrak{p}\in
\Ass_R R/(I^{n+1})^{(N)}_a.$ Then there exists an $x \in R
\setminus(I^{n+1})^{(N)}_a$ such that $\mathfrak{p}=
((I^{n+1})^{(N)}_a:_R x).$ Since $I \subseteq \mathfrak{p}$ it
follows that $Ix \subseteq (I^{n+1})^{(N)}_a.$ Hence by Lemma
\ref{2.2}, $x \in (I^n)^{(N)}_a.$ Whence we get $\mathfrak{p}\in
\Ass_R\,(I^n)^{(N)}_a/(I^{n+1})^{(N)}_a.$ Therefore, when $I$ is not contained in any of the minimal prime ideals of $\Ann_RN$, it follows that
\[
\Ass_R R/(I^{n+1})^{(N)}_a=
\Ass_R\,(I^n)^{(N)}_a/(I^{n+1})^{(N)}_a
\]
for all integers $n\geq 0.$ This finally completes the proof.
\end{proof}

\begin{theorem} \label{2.5}
Suppose that $R$ is a  Noetherian ring.  Let  $N$ denote a  finitely generated
$R$-module and let $I$ be an ideal of $R$  such that  is not contained in any of the minimal prime ideals of $\Ann_RN$. Then the sequence
\[
\{\Ass_R(I^n)^{(N)}_a/(I^n)_a\}_{n\geq 1}
\]
is increasing and eventually constant.
\end{theorem}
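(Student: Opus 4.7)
The plan is to establish two complementary facts about $M_n := (I^n)^{(N)}_a/(I^n)_a$: that $\bigcup_{n\geq 1}\Ass_R M_n$ is finite, and that the sequence $\{\Ass_R M_n\}_{n\geq 1}$ is monotone increasing. Together these yield eventual constancy.

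For the finiteness, the short exact sequence
\[
0 \to M_n \to R/(I^n)_a \to R/(I^n)^{(N)}_a \to 0
\]
gives $\Ass_R M_n \subseteq \Ass_R R/(I^n)_a$. By Ratliff's theorem \cite{R}, the sequence $\{\Ass_R R/(I^n)_a\}$ is increasing and ultimately constant, so $\bigcup_n \Ass_R M_n$ is contained in a finite set.

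For monotonicity, I would split on whether a given $\mathfrak{p} \in \Ass_R M_n$ contains $\Ann_R N$. In the case $\mathfrak{p} \not\supseteq \Ann_R N$, localization at $\mathfrak{p}$ makes $N_{\mathfrak{p}} = 0$, so by Lemma~\ref{2.1}(iv) one has $((I^n)^{(N)}_a)_{\mathfrak{p}} = R_{\mathfrak{p}}$; consequently $(M_n)_{\mathfrak{p}} = R_{\mathfrak{p}}/(I^n)_a R_{\mathfrak{p}}$, so the condition $\mathfrak{p} \in \Ass_R M_n$ reduces to $\mathfrak{p} \in \Ass_R R/(I^n)_a$, and monotonicity in this case is Ratliff's theorem applied directly.

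In the case $\mathfrak{p} \supseteq \Ann_R N$, fix $x \in (I^n)^{(N)}_a \setminus (I^n)_a$ with $\mathfrak{p} = ((I^n)_a :_R x)$; in particular $I \subseteq \mathfrak{p}$. The candidate witness for $M_{n+1}$ is $y := cx$ for a suitable $c \in I$. Then $y \in I\,(I^n)^{(N)}_a \subseteq (I^{n+1})^{(N)}_a$ by Lemma~\ref{2.1}(vi), and $\mathfrak{p}\cdot cx \subseteq c\,(I^n)_a \subseteq I\,(I^n)_a \subseteq (I^{n+1})_a$ via the classical multiplicativity $I_a (I^n)_a \subseteq (I^{n+1})_a$, giving $\mathfrak{p} \subseteq ((I^{n+1})_a :_R cx)$. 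To arrange $cx \notin (I^{n+1})_a$, I would argue by contradiction: if $Ix \subseteq (I^{n+1})_a$, then reducing modulo $\Ann_R N$ and invoking the Ratliff--Rush identity $(\widetilde{I}^{n+1})_a :_{\widetilde{R}} \widetilde{I} = (\widetilde{I}^n)_a$ in $\widetilde{R} := R/\Ann_R N$---valid because the hypothesis that $I$ avoids the minimal primes of $\Ann_R N$ forces $\widetilde{I}$ to contain a non-zerodivisor in $\widetilde{R}$---together with the pullback formula $(I^n)^{(N)}_a = \pi^{-1}((\widetilde{I}^n)_a)$ from Lemma~\ref{2.1}(iii), one pushes through to derive $x \in (I^n)_a$, contradicting our choice. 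Once such a $c$ is found, a final prime-avoidance argument within the finite set of associated primes provided by the first step sharpens the colon $((I^{n+1})_a :_R cx)$ to be exactly $\mathfrak{p}$.

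The main obstacle is this descent step: the Ratliff--Rush identity holds cleanly only in $\widetilde{R}$, where the ideals involved are the integral closures of $\widetilde{I}^n$, whereas $(I^n)_a$ in $R$ and its image in $\widetilde{R}$ are linked only by inclusion, not equality. Bridging the gap requires exploiting that $x \in (I^n)^{(N)}_a$ (rather than merely $x \in R$), via the pullback formula of Lemma~\ref{2.1}(iii), to transfer the contradiction obtained in $\widetilde{R}$ back to $R$. Once this step is effected, the increasing sequence $\{\Ass_R M_n\}$ sits inside a finite set and hence stabilizes.
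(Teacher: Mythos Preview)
Your overall strategy matches the paper's: bound $\bigcup_n \Ass_R M_n$ inside $A^*_a(I)$, then show monotonicity by multiplying a witness $x$ by an element of $I$. The finiteness step and your Case~1 are fine. But the crux---showing $Ix \not\subseteq (I^{n+1})_a$ in Case~2---does not go through. You argue: if $Ix \subseteq (I^{n+1})_a$, reduce modulo $\Ann_R N$, apply the colon identity $((\widetilde{I}^{n+1})_a :_{\widetilde{R}} \widetilde{I}) = (\widetilde{I}^n)_a$ to get $\widetilde{x} \in (\widetilde{I}^n)_a$, and then pull back to ``$x \in (I^n)_a$''. But Lemma~\ref{2.1}(iii) says $\pi^{-1}((\widetilde{I}^n)_a) = (I^n)^{(N)}_a$, not $(I^n)_a$; so all you recover is $x \in (I^n)^{(N)}_a$, which was your standing hypothesis on $x$. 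No contradiction results. Your proposed fix (``exploiting that $x \in (I^n)^{(N)}_a$'') is circular for exactly this reason: that membership \emph{is} the statement $\widetilde{x} \in (\widetilde{I}^n)_a$. Passing to $\widetilde{R}$ collapses precisely the distinction between $(I^n)_a$ and $(I^n)^{(N)}_a$ that the contradiction must exploit.

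The paper's argument avoids both the case split and your vague prime-avoidance ``sharpening'' by localizing at $\mathfrak{p}$ at the outset, so that any proper ideal containing $\mathfrak{p}$ automatically equals $\mathfrak{p}$. It then uses the ideal $Ix$ as the witness: the inclusions $\mathfrak{p}\cdot Ix \subseteq I(I^n)_a \subseteq (I^{n+1})_a$ and $Ix \subseteq (I^{n+1})^{(N)}_a$ are immediate, and properness of $((I^{n+1})_a :_R Ix)$ is obtained by invoking Lemma~\ref{2.2} \emph{in $R$ itself}---the instance $((I^{n+1})_a :_R I) = (I^n)_a$, which forces $x \in (I^n)_a$ if $Ix \subseteq (I^{n+1})_a$, contradicting $x \notin (I^n)_a$. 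The point you are missing is that the colon formula must be applied to the ordinary integral closure in $R$, not to its image in $\widetilde{R}$.
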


\begin{proof} Assume first that $\mathfrak{p}\in \Ass_R(I^n)^{(N)}_a/(I^n)_a$
for some integer $n \geq 1.$ Without loss of generality  we may
assume  that $(R, \mathfrak{p})$ is a local ring.  There exists an element $x\in
(I^n)^{(N)}_a$ such that $\mathfrak{p}= ((I^n)_a:_R x).$ Then,  in view of
Lemma \ref{2.2}, $((I^{n+1})_a:_R Ix)$ is a proper ideal of $R$ and
$\mathfrak{p}\subseteq(I^{n+1})_a:_R Ix.$ Thus $\mathfrak{p}= ((I^{n+1})_a:_R Ix).$ Because of $Ix
\subseteq (I^{n+1})^{(N)}_a$ it follows that $\mathfrak{p}\in
\Ass_R(I^{n+1})^{(N)}_a/(I^{n+1})_a.$ Therefore the sequence
$\{\Ass_R(I^n)^{(N)}_a/(I^n)_a\}_{n\geq 1}$ is increasing. As
\[
\bigcup_{n\geq1} \Ass_R(I^n)^{(N)}_a/(I^n)_a \subseteq
A^*_a(I),
\]
and $A^*_a(I)$ is finite, we deduce  that it is eventually
constant for large $n.$
\end{proof}

\begin{corollary}
Let $R$ be a  Noetherian ring,  $N$  a  finitely generated
$R$-module and $I$  an ideal of $R$  such that  is not contained in any of the minimal prime ideals of $\Ann_RN$. Then the sequence
 \[
\{\Ass_R (I^n)^{(N)}_a/((I+ \Ann_RN)^n)_a\}_{n\geq 1}
\]
is increasing and ultimately constant.
\end{corollary}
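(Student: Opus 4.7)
Set $J := I + \Ann_R N$. The strategy is to imitate the proof of Theorem \ref{2.5}, replacing $(I^n)_a$ by $(J^n)_a$: establish monotonicity of the sequence by an associated-prime-shifting argument, and bound the union in a finite set. For the finiteness, the short exact sequence
\[
0 \to (I^n)^{(N)}_a/(J^n)_a \to R/(J^n)_a \to R/(I^n)^{(N)}_a \to 0
\]
gives $\Ass_R (I^n)^{(N)}_a/(J^n)_a \subseteq \Ass_R R/(J^n)_a$, and Ratliff's theorem (the $N=R$ case of Theorem \ref{2.3}) shows the latter lies inside the finite set $A^*_a(J)$. Thus, once the sequence is shown to be increasing, ultimate constancy is automatic.

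For monotonicity, let $\mathfrak{p} \in \Ass_R (I^n)^{(N)}_a/(J^n)_a$. After localizing at $\mathfrak{p}$, choose $x \in (I^n)^{(N)}_a \setminus (J^n)_a$ with $\mathfrak{p} = ((J^n)_a :_R x)$; in particular $J \subseteq \mathfrak{p}$. For any $y = y_1 + y_2 \in J$ with $y_1 \in I$ and $y_2 \in \Ann_R N$, one has $y_1 x \in I(I^n)^{(N)}_a \subseteq (I^{n+1})^{(N)}_a$ by Lemma \ref{2.1}(vi), and $y_2 x \in \Ann_R N \subseteq (I^{n+1})^{(N)}_a$ by Lemma \ref{2.1}(i); so $Jx \subseteq (I^{n+1})^{(N)}_a$. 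Moreover, $\mathfrak{p} \cdot yx \subseteq J(J^n)_a \subseteq (J^{n+1})_a$ for every $y \in J$, the last inclusion being a standard property of classical integral closure. Thus everything reduces to producing some $y \in J$ with $yx \notin (J^{n+1})_a$; the usual local argument (as in Theorem \ref{2.5}) then yields $\mathfrak{p} = ((J^{n+1})_a :_R yx) \in \Ass_R (I^{n+1})^{(N)}_a/(J^{n+1})_a$.

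The one non-routine step is therefore the cancellation $((J^{n+1})_a :_R J) = (J^n)_a$: if $Jx \subseteq (J^{n+1})_a$, this identity forces $x \in (J^n)_a$, contradicting the choice of $x$. This is exactly Lemma \ref{2.2} applied to the ring $R$ with $N$ replaced by $R$ and $I$ by $J$, whose hypothesis requires that $J$ be not contained in any minimal prime of $R$. Verifying this is the main (mild) obstacle and is the only place the standing hypothesis on $I$ enters: any minimal prime $\mathfrak{q}$ of $R$ with $J \subseteq \mathfrak{q}$ contains $\Ann_R N$, and, being minimal in $R$, is automatically minimal over $\Ann_R N$; the resulting inclusion $I \subseteq \mathfrak{q}$ then contradicts the hypothesis. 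Once this translation between the relative hypothesis on $I$ and the absolute hypothesis on $J$ is noted, the proof is complete.
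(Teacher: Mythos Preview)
Your argument is correct, but you are essentially re-running the proof of Theorem~\ref{2.5} by hand with $J=I+\Ann_RN$ in place of $I$. The paper's proof is a one-line reduction: by Lemma~\ref{2.1}(iii) one has $(I^n)^{(N)}_a=((I+\Ann_RN)^n)^{(N)}_a$ for every $n$, so the sequence in question is literally $\{\Ass_R (J^n)^{(N)}_a/(J^n)_a\}_{n\ge1}$, and Theorem~\ref{2.5} applies directly to $J$ (its hypothesis, that $J$ avoid the minimal primes of $\Ann_RN$, is immediate from the hypothesis on $I$). Your detour through $Jx\subseteq (I^{n+1})^{(N)}_a$ via Lemma~\ref{2.1}(i),(vi) and the separate verification that $J$ avoids the minimal primes of $R$ are all subsumed in that single citation; the identity $(I^n)^{(N)}_a=(J^n)^{(N)}_a$ is the observation you were missing.
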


\begin{proof} This  follows from Theorem \ref{2.5},  when $I$ is replaced by $I+\Ann_RN$, since  the integral closure with respect to $N$
of their powers are the same.
\end{proof}
\begin{definition} \label{4.4}
Suppose that $R$ is a  Noetherian ring. Let $I$ be an
ideal of $R.$ Let $N$ denote a  finitely generated
$R$-module.
The  eventual constant values of the sequences
\[
\{\Ass_R R/(I^n)^{(N)}_a \}_{n\geq1} \text{ and }
\{\Ass_R(I^n)^{(N)}_a/((I+ \Ann_R N)^n)_a \}_{n\geq1}
\]
will be denoted by $A^*_a(I, N)$ and $C^*_a(I, N)$ respectively.

It is easy to see that $A^*_a(I, N)$ and $C^*_a(I, N)$ are stable
under localization. Moreover, $\mAss_RN/IN\subseteq A^*_a(I, N)$ and  $A^*_a(0, N)=\mAss_R N.$
\end{definition}

\begin{proposition} \label{4.5} Let $R$ be a  Noetherian ring,  $I$ an
ideal of $R$  and   $N$  a  finitely generated $R$-module. Then
 $A^*_a(I+ \Ann_R N)\setminus C^*_a(I, N) \subseteq A^*_a(I, N).$

\end{proposition}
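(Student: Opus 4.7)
The natural plan is to exhibit a short exact sequence that links the three asymptotic sets, and then extract the inclusion by taking associated primes.

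First I would verify that $((I+\Ann_R N)^n)_a \subseteq (I^n)^{(N)}_a$ for every $n \geq 1$. Set $J := I + \Ann_R N$. By Lemma~2.1(iii) applied separately to $I$ and to $J$, both $I^{(N)}_a/\Ann_R N$ and $J^{(N)}_a/\Ann_R N$ equal $(J/\Ann_R N)_a$, so $I^{(N)}_a = J^{(N)}_a$. The same calculation with $I^n$ and $J^n$ (which differ only by the annihilator, since $I^n \subseteq J^n \subseteq I^n + \Ann_R N$) gives $(I^n)^{(N)}_a = (J^n)^{(N)}_a$. Combined with Lemma~2.1(i), this yields $(J^n)_a \subseteq (J^n)^{(N)}_a = (I^n)^{(N)}_a$, as needed.

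Next I would assemble the short exact sequence
\[
0 \longrightarrow (I^n)^{(N)}_a/((I+\Ann_R N)^n)_a \longrightarrow R/((I+\Ann_R N)^n)_a \longrightarrow R/(I^n)^{(N)}_a \longrightarrow 0
\]
and apply the elementary fact that for $0 \to A \to B \to C \to 0$ one has $\Ass_R B \subseteq \Ass_R A \cup \Ass_R C$. This gives
\[
\Ass_R R/((I+\Ann_R N)^n)_a \;\subseteq\; \Ass_R (I^n)^{(N)}_a/((I+\Ann_R N)^n)_a \;\cup\; \Ass_R R/(I^n)^{(N)}_a.
\]

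Finally, I would choose $n$ large enough that all three associated-prime sequences are at their eventual values simultaneously; this is legitimate because Ratliff's theorem stabilizes the left-hand side (to $A^*_a(I+\Ann_R N)$), Theorem~2.3 stabilizes the right-hand term (to $A^*_a(I, N)$), and the corollary preceding Definition~2.8 stabilizes the middle term (to $C^*_a(I, N)$). Passing to these stable values yields $A^*_a(I+\Ann_R N) \subseteq C^*_a(I,N) \cup A^*_a(I,N)$, which is equivalent to the claimed inclusion $A^*_a(I+\Ann_R N)\setminus C^*_a(I, N) \subseteq A^*_a(I, N)$.

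The only step with any subtlety is the first: one must notice that $I^{(N)}_a$ is stable under adding $\Ann_R N$ to $I$ (in every power), so that $J_a \subseteq I^{(N)}_a$ and hence the short exact sequence is well defined. Once this observation is in hand, the rest is a formal consequence of the stability results already proved in Section~2 together with Ratliff's theorem.
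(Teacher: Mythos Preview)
Your proof is correct and is essentially the paper's argument in slightly more packaged form: the paper also relies on the inclusion $((I+\Ann_R N)^n)_a \subseteq (I^n)^{(N)}_a$, but instead of invoking the general fact $\Ass_R B \subseteq \Ass_R A \cup \Ass_R C$ for the short exact sequence, it localizes at $\mathfrak{p}$ and traces a specific witness $x$ through the filtration $((I+\Ann_R N)^n)_a \subseteq (I^n)^{(N)}_a \subseteq R$. That elementwise chase is exactly the standard proof of the $\Ass$-inclusion you quote, so the two arguments coincide.
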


\begin{proof} Let $\mathfrak{p}\in A^*_a(I+ \Ann_R N)\setminus C^*_a(I, N).$ Because $A^*_a(I+ \Ann_R N),$
$C^*_a(I, N)$ and $A^*_a(I, N)$ behave well under localization we
may assume that $(R, \mathfrak{p})$ is a local ring.  Let $\mathfrak{p} = ((I+ \Ann_R N)^n)_a:_R x)$ for some
$x\in R$ and for  large $n.$ Because $\mathfrak{p} \subseteq
((I^n)^{(N)}_a:_R x)$ and $\mathfrak{p} \not\in C^*_a(I, N)$ it
follows that $\mathfrak{p}=( (I^n)^{(N)}_a:_R x).$ Hence
$\mathfrak{p} \in A^*_a(I, N)$, as required.
\end{proof}

\begin{remark} \label{4.6}
Let $R$ be a  Noetherian ring,  $N$  a  finitely generated $R$-module and $I$  an ideal of $R$  such that  is not contained in any of the minimal prime ideals of $\Ann_RN$ and  
 $(I^nR_\mathfrak{p})^{(N_\mathfrak{p})}_a = ((IR_\mathfrak{p}+\Ann_{R_\mathfrak{p}}N_\mathfrak{p})^n)_a$,
  for large $n$ and for all $\mathfrak{p} \in C^*_a(I, N).$ Then $(I^n)^{(N)}_a = ((I+\Ann_RN)^n)_a$  for large $n$. For this,
let $C^*_a(I, N)= \{\mathfrak{p}_1, \ldots, \mathfrak{p}_t\}.$  Choose  an integer $k$ such that
\[
C^*_a(I, N)= \Ass_R (I^n)^{(N)}_a /((I+ \Ann_RN)^n)_a
\]
for all $n \geq k$, and let
$(I^nR_{\mathfrak{p}_i})^{(N_{\mathfrak{p}_i})}_a=
((IR_{\mathfrak{p}_i}+ \Ann_{R_\mathfrak{p_i}}
N_\mathfrak{p_i})^n)_a$ for all $i= 1, \ldots, t.$ Now, if
 $(I^n)^{(N)}_a\neq ((I+ \Ann_RN)^n)_a,$ then
there exists   $x\in (I^n)^{(N)}_a\setminus ((I+ \Ann_RN)^n)_a,$   and  so $((I+ \Ann_RN)^n)_a:_R x$ is a proper ideal of $R$.  As $R$ is Noetherian,   there
is   $r \in R$ such that $\mathfrak{p}:=((I+ \Ann_RN)^n)_a:_R rx$
is a prime ideal of $R,$  and  hence $\mathfrak{p}\in
C^*_a(I, N).$ Thus $(I^nR_\mathfrak{p})^{(N_\mathfrak{p})}_a=
((IR_\mathfrak{p}+ \Ann_{R_\mathfrak{p}}N_\mathfrak{p})^n)_a$;
so that $x/1 \in ((I+ \Ann_RN)^n)_aR_\mathfrak{p}.$ Hence there is $s
\in R \setminus \mathfrak{p}$ such that $sx \in ((I+
\Ann_RN)^n)_a.$ That is $s \in ((I+ \Ann_RN)^n)_a:_R x \subseteq
\mathfrak{p}$, which is a contradiction.

\end{remark}


\section{Quintasymptotic primes and ideal topologies}

In this section we study the equivalence of the topologies defined by  $(I^n)_a^{(N)}$,
$S((I^n)_a^{(N)})$, $S(((I+ \Ann_RN)^n)_a)$  and $S((I+ \Ann_RN)^n), \, {n\geq 1},$  by using
the quintasymptotic prime ideals of $I$ with respect to $N$. The main results are Proposition 3.10 and Theorem 3.11. As a consequence we show that
$\bar{Q}^*(I, N)= \mAss_R N/IN$ if and only if the topologies  $(I^n)^{(N)}_a$
 and $(I^n)^{\langle N \rangle}_a, \, {n \geq 1},$ are equivalent.  We begin with the following elementary result.

\begin{lemma} \label{2.7}
Let $R$ be a Noetherian ring and $N$ a  finitely generated
$R$-module. Let $T$ be a faithfully flat Noetherian ring extension
of $R.$ Then, for any ideal $I$ of $R,$
\[
(IT)^{(N\otimes_RT)}_a \cap R= I^{(N)}_a.\]
\end{lemma}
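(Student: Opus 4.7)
The plan is to establish both inclusions directly from the definition of integral dependence relative to $N$, using flatness of $T$ for $\supseteq$ and faithful flatness for $\subseteq$. I would deliberately avoid reducing to the classical case via the ring $\widetilde{R} = R/\Ann_R N$ of Lemma~\ref{2.1}(iii), because that route would require separately invoking the (standard but unstated) faithfully flat descent statement for ordinary integral closures.

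For the inclusion $I^{(N)}_a \subseteq (IT)^{(N \otimes_R T)}_a \cap R$, take $x \in I^{(N)}_a$ and fix $n \geq 1$ with $x^n N \subseteq \sum_{i=1}^n x^{n-i} I^i N$ inside $N$. Applying the exact functor $-\otimes_R T$ to this containment of submodules of $N$, and using flatness of $T$ to identify each $(x^{n-i} I^i N) \otimes_R T$ with the $T$-submodule $x^{n-i} (IT)^i (N \otimes_R T)$ of $N \otimes_R T$ (noting $(IT)^i \cdot (N \otimes_R T) = I^i \cdot (N \otimes_R T)$), one immediately obtains the required containment
\[
x^n(N\otimes_R T) \;\subseteq\; \sum_{i=1}^n x^{n-i} (IT)^i (N\otimes_R T).
\]

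For the converse, suppose $x \in R$ satisfies $x \in (IT)^{(N \otimes_R T)}_a$, so the analogous containment holds in $N \otimes_R T$ for some $n$. I would set $A := x^n N$, $B := \sum_{i=1}^n x^{n-i} I^i N$, and $M := (A+B)/B$; observe that $x \in I^{(N)}_a$ is equivalent to $M = 0$. Tensoring the short exact sequence $0 \to B \to A+B \to M \to 0$ with $T$, flatness places $(A+B) \otimes_R T$ and $B \otimes_R T$ as submodules of $N \otimes_R T$, and the same submodule identifications as above, together with the hypothesis on $x$, show these two submodules coincide. Hence $M \otimes_R T = 0$, and faithful flatness of $T$ forces $M = 0$.

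The main delicate step is the verification of the submodule identifications $(x^{n-i} I^i N) \otimes_R T \cong x^{n-i} (IT)^i (N \otimes_R T)$ inside $N \otimes_R T$, which rests on flatness preserving the inclusions $x^{n-i} I^i N \hookrightarrow N$. Once this routine calculation is secured, the two hypotheses on $T$ split cleanly: plain flatness drives the easy direction, while only the implication $M\otimes_R T = 0 \Rightarrow M = 0$, i.e.\ the \emph{faithful} part, is needed for the hard direction.
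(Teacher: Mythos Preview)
Your argument is correct and follows essentially the same strategy as the paper: flatness for the easy inclusion, faithful flatness (via $M\otimes_R T=0\Rightarrow M=0$) for the hard one. The only cosmetic difference is that the paper works with the reduction criterion $(I+Rx)^{n+1}N = I(I+Rx)^n N$ from \cite[Corollary~1.5]{STY} rather than the defining inclusion $x^nN\subseteq\sum_i x^{n-i}I^iN$; this lets them compare two submodules by an equality instead of an inclusion, but the underlying descent step is identical to yours.
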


\begin{proof} Let $x\in (IT)^{(N\otimes_RT)}_a\cap R.$ Then, in view of
\cite[Corollary 1.5]{STY}, there is an integer $n\geq1$ such that
$$(IT+Tx)^{n+1}(N\otimes_RT)= IT(IT+Tx)^n(N\otimes_RT).$$ Hence
\[
(I+Rx)^{n+1}(N\otimes_RT)= I(I+Rx)^n(N\otimes_RT).
\]
Therefore $$(I+Rx)^{n+1}N \otimes_RT= I(I+Rx)^nN \otimes_RT.$$ Now, by
faithfully flatness, we deduce that $(I+Rx)^{n+1}N= I(I+Rx)^nN$; hence  $x\in I^{(N)}_a$, by \cite[Corollary 1.5]{STY}.
Therefore the conclusion follows, since the opposite inclusion is clear by the faithfully flatness of $T$ over $R$.

\end{proof}

\begin{remark} \label{2.8}
Before continuing, let us fix two notations, which are employed by
Schenzel in \cite{Sc3} and McAdam in \cite{Mc2} in the
case $N= R$, respectively.

For any multiplicatively closed subset $S$ of $R$ and for each
ideal $J$ of $R$ we use $S(J)$ to denote the ideal $\cup_{s\in
S}(J:_Rs).$ Note that
\[
\Ass_RR/S(J)= \{ \mathfrak{p}\in \Ass_R R/J \,:\,\mathfrak{p}\cap
S=\emptyset\}.
\]
In the case $N$ is a finitely generated $R$-module and $S=R\backslash \bigcup \{\frak p\in \mAss_RN/JN\}$, we use $J_a^{\langle N \rangle}$ to denote the ideal $S(J_a^{(N)})$.
In particular, for every integer $k\geq1$ and every prime ideal
$\mathfrak{p}$ of $R$,  we have
\[
(\mathfrak{p}^k)^{\langle N \rangle}_a= \bigcup _{s\in R\backslash
\mathfrak{p}}((\mathfrak{p}^k)^{(N)}_a:_Rs).
\]
\end{remark}

\begin{proposition} \label{2.9}
Let $R$ be a Noetherian ring and let $N$ be a finitely generated $R$-module.
\begin{itemize}
\item[(i)] If  $(R, \mathfrak{m})$ is  local and $\mathfrak{p}\in \mAss_RN,$ then there exists an  element
$x\in R$ not in $\mathfrak{p}$ such that  for every ideal $J$ of $R$
with  $\mathfrak{m}$ is minimal over  $J+ \mathfrak{p}$,  $x\in
J+ \Ann_RN$ or $\mathfrak{m}\in \Ass_RR/J+ \Ann_RN$.

\item[(ii)] If $\mathfrak{p}\in \Spec R$ and $\mathfrak{q}\in
\mAss_RN$  with $\mathfrak{q}\subseteq\mathfrak{p},$ then
there is an integer $k\geq1$ such that $\mathfrak{p}\in
\Ass_RR/J+ \Ann_RN$ for any ideal $J$ of $R$ with $J\subseteq
(\mathfrak{p}^k)^{\langle N \rangle}_a$ and $\mathfrak{p}\in \mAss_RR/J+
\mathfrak{q}.$
\end{itemize}
\end{proposition}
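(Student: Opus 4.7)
Both parts will be deduced from the corresponding classical results of McAdam~\cite{Mc2} (the case $N=R$) by passing to the Noetherian quotient ring $\bar R := R/\Ann_R N$. Throughout the argument, for an ideal $\mathfrak a$ of $R$ I write $\bar{\mathfrak a} := (\mathfrak a+\Ann_R N)/\Ann_R N$ for its image in $\bar R$; prime ideals of $R$ containing $\Ann_R N$ correspond bijectively to prime ideals of $\bar R$ under this operation, and $R/(\mathfrak a+\Ann_R N)\cong \bar R/\bar{\mathfrak a}$ as $\bar R$-modules.

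For part (i), the first step is to note that $\mathfrak p\in\mAss_R N$ forces $\mathfrak p\supseteq\Ann_R N$ and $\bar{\mathfrak p}\in\mAss \bar R$. Then I apply McAdam's classical lemma in the local ring $(\bar R,\bar{\mathfrak m})$ to produce $\bar x\in \bar R\setminus \bar{\mathfrak p}$ with the required property, and lift it to some $x\in R\setminus\mathfrak p$. Given any ideal $J$ with $\mathfrak m$ minimal over $J+\mathfrak p$, set $\bar J:=\overline{J+\Ann_R N}$; minimality of $\mathfrak m$ over $J+\mathfrak p$ transfers to minimality of $\bar{\mathfrak m}$ over $\bar J+\bar{\mathfrak p}$ by the prime-correspondence. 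The classical statement then yields either $\bar x\in \bar J$ (equivalently $x\in J+\Ann_R N$) or $\bar{\mathfrak m}\in\Ass_{\bar R}\bar R/\bar J$ (equivalently $\mathfrak m\in\Ass_R R/(J+\Ann_R N)$), which is the required dichotomy.

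For part (ii), a similar reduction is carried out, but now the integral closure operation must be translated. From $\mathfrak q\in\mAss_R N$ one gets $\mathfrak q\supseteq\Ann_R N$ and hence $\mathfrak p\supseteq\Ann_R N$, so $\bar{\mathfrak q}\in\mAss\bar R$ and $\bar{\mathfrak q}\subseteq\bar{\mathfrak p}$. By Lemma~\ref{2.1}(iii), the image of $(\mathfrak p^k)_a^{(N)}$ in $\bar R$ equals $(\bar{\mathfrak p}^k)_a$; taking the union over $s\notin\mathfrak p$ (equivalently $\bar s\notin\bar{\mathfrak p}$) and using that colons commute with surjective quotients shows that $(\mathfrak p^k)_a^{\langle N\rangle}$ maps into $(\bar{\mathfrak p}^k)_a^{\langle \bar R\rangle}$. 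Next, I apply McAdam's classical analogue of (ii) in $\bar R$ to obtain an integer $k\geq 1$ such that $\bar{\mathfrak p}\in\Ass_{\bar R}\bar R/\bar J$ whenever $\bar J\subseteq(\bar{\mathfrak p}^k)_a^{\langle \bar R\rangle}$ and $\bar{\mathfrak p}\in\mAss_{\bar R}\bar R/(\bar J+\bar{\mathfrak q})$. For a given $J$ satisfying the hypotheses of (ii), setting $\bar J=\overline{J+\Ann_R N}$ allows both hypotheses to descend to $\bar R$ by the above identifications, and the conclusion $\mathfrak p\in\Ass_R R/(J+\Ann_R N)$ is obtained by lifting $\bar{\mathfrak p}\in\Ass_{\bar R}\bar R/\bar J$.

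The step I expect to be most delicate is the bookkeeping in part (ii): the operation $(\,\cdot\,)_a^{\langle N\rangle}$ combines taking the integral closure with respect to $N$ and saturating by a union of colons indexed by $R\setminus\mathfrak p$, and one has to verify that it passes cleanly to the corresponding $(\,\cdot\,)_a^{\langle \bar R\rangle}$ in $\bar R$. This rests on Lemma~\ref{2.1}(iii) together with a routine check that unions of colons commute with surjective ring homomorphisms. Once this dictionary is in place, both parts become immediate applications of the classical McAdam lemmas in $\bar R$.
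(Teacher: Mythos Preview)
Your approach is correct but differs from the paper's. The paper gives a self-contained direct argument: for (i) it chooses $x$ explicitly from an irredundant primary decomposition $\Ann_R N = \mathfrak q_1 \cap \cdots \cap \mathfrak q_t$ (with $\mathfrak q_1$ the $\mathfrak p$-primary component), taking $x \in \bigcap_{i \geq 2} \mathfrak q_i \setminus \mathfrak p$, and then verifies the dichotomy by examining a primary decomposition of $J + \Ann_R N$. For (ii), rather than invoking McAdam again, the paper localizes at $\mathfrak p$ and deduces (ii) directly from (i): the element $x$ from part (i) (applied with $\mathfrak q$ as the minimal prime and $\mathfrak p$ as the maximal ideal) lies outside $\Rad(\Ann_R N) = \bigcap_k (\mathfrak p^k)_a^{(N)}$ by Lemma~\ref{2.1}(v), so one picks $k$ with $x \notin (\mathfrak p^k)_a^{(N)}$; then $J \subseteq (\mathfrak p^k)_a^{\langle N \rangle}$ forces $x \notin J + \Ann_R N$, and the dichotomy in (i) gives the conclusion.

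Your reduction to $\bar R = R/\Ann_R N$ and appeal to the $N=R$ case in \cite{Mc2} is shorter if one is willing to import those lemmas wholesale; your translation of $(\,\cdot\,)_a^{\langle N \rangle}$ via Lemma~\ref{2.1}(iii) is correct (in fact the image of $(\mathfrak p^k)_a^{\langle N \rangle}$ in $\bar R$ \emph{equals} $(\bar{\mathfrak p}^{\,k})_a^{\langle \bar R \rangle}$, not merely is contained in it, since $\Ann_R N \subseteq (\mathfrak p^k)_a^{(N)}$ and colons commute with the quotient). What the paper's route buys is self-containment and a cleaner internal structure: (ii) becomes an immediate corollary of (i) after localization, whereas your version treats the two parts as parallel black-box reductions to McAdam.
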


\begin{proof} In order to show (i),   let  $$\frak q_1\cap \dots \cap \frak q_t= \Ann_RN,$$ be an irredundant primary
decomposition of the ideal $\Ann_RN$ with $\frak q_1$ \,  $\frak p$-primary. (Note that $\mathfrak{p}\in \Ass_RR/\Ann_RN$.)
It follows from $\mathfrak{p}\in \mAss_RN$  that  $\bigcap _{i=2}^t\frak q_i\nsubseteq \frak p$. Hence there exists
$x\in \bigcap _{i=2}^t\frak q_i$ such that $x\not\in \frak p$.  Now, let $J$ be any ideal of $R$ such that ${\rm Rad}(J+ \mathfrak{p})= \mathfrak{m}$
and that $\mathfrak{m}\not\in \Ass_RR/J+ \Ann_RN$. It is enough for us to show that $x\in J+ \Ann_RN$. To this end, let
$$Q_1\cap \dots \cap Q_l=J+\Ann_RN,$$
 be an irredundant primary decomposition of the ideal $J+\Ann_RN$, with $Q_i$ is $\frak p_i$-primary ideal, for
all $i=1, \dots, l$. Then $\frak m\neq \frak p_i$, for
all $i=1, \dots, l$, and so  it follows from  ${\rm Rad}(J+ \mathfrak{p})= \mathfrak{m}$ that $\frak p\nsubseteq \frak p_i$.  Hence $\frak p^v\nsubseteq \frak p_i$,  where $v\geq1$ is an integer such that $\frak p^v\subseteq \frak q_1$. Therefore, because of
$x\frak q_1\subseteq \Ann_RN$, it follows that $x\frak p^v\subseteq  Q_i$, for all $i=1, \dots, l$. Consequently $x\in Q_1\cap \dots \cap Q_l$, so that
$x\in J+\Ann_RN,$ as required.

 In order to prove (ii), without loss of generality,  we may assume that $(R, \mathfrak{p})$ is local. Then
$(\mathfrak{p}^k)^{\langle N \rangle}_a= (\mathfrak{p}^k)^{(N)}_a.$ Now, let $x$ be
as in (i). Then, in view of Lemma \ref{2.1}(v),  there exists an
integer $k\geq1$ such that $x \not\in (\mathfrak{p}^k)^{(N)}_a.$ Therefore if $J$ is an ideal of $R$ such that  $J\subseteq
(\mathfrak{p}^k)^{\langle N \rangle}_a$ and $\mathfrak{p}\in \mAss_RR/J+
\mathfrak{q}$, then  $x\not\in J+\Ann_RN,$ and so it follows from (i) that  $\mathfrak{p}\in \Ass_RR/J+ \Ann_RN$.
\end{proof}

\begin{proposition} \label{2.10}
Let $I$ be an ideal of a Noetherian ring  $R$ and $S$ a multiplicatively closed subset  of $R.$
 Then for any finitely generated $R$-module $N$,
\[
\bigcap_{n\geq1}S((I^n)^{(N)}_a)= \bigcap \{\mathfrak{p}\in
\mAss_RN\,\mid\,(I+ \mathfrak{p})\cap S= \emptyset \}.\]
\end{proposition}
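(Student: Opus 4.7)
The plan is to pass from $R$ to the localization $S^{-1}R$, apply Lemma~2.1(iv) and Lemma~2.1(v) in the localization, and then contract back to $R$. The whole argument rests on the identification $S(J) = JS^{-1}R \cap R$, so that $S(-)$ is nothing but contraction along the map $R \to S^{-1}R$.

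Since preimage (contraction) commutes with intersection, I would first write
\[
\bigcap_{n\geq 1} S\bigl((I^n)^{(N)}_a\bigr) = \Bigl(\bigcap_{n\geq 1} S^{-1}(I^n)^{(N)}_a\Bigr) \cap R.
\]
Lemma~2.1(iv) then lets me replace $S^{-1}(I^n)^{(N)}_a$ by $((S^{-1}I)^n)^{(S^{-1}N)}_a$, after which Lemma~2.1(v), now applied inside the Noetherian ring $S^{-1}R$ to the ideal $S^{-1}I$ and the Noetherian module $S^{-1}N$, identifies the inner intersection with
\[
\bigcap\bigl\{\mathfrak{q}\in \mAss_{S^{-1}R}(S^{-1}N) : \mathfrak{q} + S^{-1}I \neq S^{-1}R\bigr\}.
\]

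To finish, I would translate back to $R$ via the usual bijection between primes of $S^{-1}R$ and primes of $R$ disjoint from $S$. This bijection preserves containment and associated primes, and a short check using the incomparability of $\mAss_R N$ gives
\[
\mAss_{S^{-1}R}(S^{-1}N) = \{\mathfrak{p}S^{-1}R : \mathfrak{p}\in \mAss_R N,\ \mathfrak{p}\cap S = \emptyset\}.
\]
Moreover $\mathfrak{p}S^{-1}R + S^{-1}I = S^{-1}R$ is equivalent to $(I+\mathfrak{p})\cap S \neq \emptyset$, and since $\mathfrak{p}\subseteq I+\mathfrak{p}$ the latter already subsumes $\mathfrak{p}\cap S \neq \emptyset$. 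Thus the surviving primes are exactly $\{\mathfrak{p}\in \mAss_R N : (I+\mathfrak{p})\cap S = \emptyset\}$, and contracting each $\mathfrak{p}S^{-1}R$ back to $\mathfrak{p}$ yields the asserted formula.

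The one real subtlety is the compatibility of $\mAss$ with localization: I need the minimal associated primes of $S^{-1}N$ to correspond exactly to those $\mathfrak{p}\in \mAss_R N$ disjoint from $S$, not merely to the minimal elements of $\{\mathfrak{p}\in \Ass_R N : \mathfrak{p}\cap S = \emptyset\}$; this is handled by the incomparability remark above. Once this piece of bookkeeping is pinned down, Lemmas~2.1(iv), 2.1(v) and the contraction identity $S(J) = JS^{-1}R \cap R$ do essentially all the remaining work.
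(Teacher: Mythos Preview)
Your proof is correct and follows essentially the same approach as the paper: both arguments pass to $S^{-1}R$, invoke Lemma~2.1(iv) and Lemma~2.1(v) there, and then contract back to $R$. The paper presents this as two separate inclusions (the converse inclusion is written exactly as you describe), whereas you package it more efficiently as a single chain of equalities via the identity $S(J)=JS^{-1}R\cap R$ and the commutation of contraction with intersection; the underlying ideas and the lemmas used are identical.
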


\begin{proof} Let $x\in \bigcap_{n\geq1}S((I^n)^{(N)}_a).$ Then, for all $n\geq1$,  there
exists $s\in S$ such that $sx \in (I^n)^{(N)}_a.$ Now let
$\mathfrak{p}\in \mAss_R N$ be such that $(\mathfrak{p}+ I)\cap S=
\emptyset.$ Then, it follows from Lemma \ref{2.1}(v) that  $x\in \mathfrak{p}.$

Conversely, suppose that $x\in \mathfrak{p}$ for all
$\mathfrak{p}\in \mAss_RN$ with $(\mathfrak{p}+ I)\cap S=
\emptyset.$ Then, by virtue of Lemma \ref{2.1}(v), $x/1\in
(S^{-1}I^n)^{(S^{-1}N)}_a$ for all $n\geq1.$ Hence, in view of  Lemma \ref{2.1}(iv),   $x/1\in
S^{-1}((I^n)^{(N)}_a)$, and so $sx \in (I^n)^{(N)}_a$ for some
$s\in S.$ Consequently, we have $x\in S((I^n)^{(N)}_a)$, as
required.
\end{proof}

\begin{theorem} \label{2.11}
Let $R$ be a Noetherian ring and let $N$ be a finitely
generated $R$-module. Let $I$ and $J$ be ideals of $R.$ Then
\[
\bigcap_{n\geq1}((I^n)^{(N)}_a:_R \langle J\rangle)=
\bigcap\{\mathfrak{p}\in \mAss_RN\,\mid\,J\nsubseteq {\rm Rad}(I+
\mathfrak{p})\}.
\]
\end{theorem}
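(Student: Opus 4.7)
My plan is to parlay Proposition \ref{2.10} — which characterizes $\bigcap_n S((I^n)^{(N)}_a)$ for a multiplicatively closed set $S$ — into the $J$-saturation statement above. Although $J$ itself is not multiplicatively closed, the sets $S_j := \{j^k : k \geq 0\}$ for individual $j \in J$ are, and these will furnish the bridge.

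For the inclusion $(\subseteq)$, I would proceed as follows. Let $x$ belong to the left-hand side and let $\mathfrak{p} \in \mAss_R N$ satisfy $J \nsubseteq \Rad(I+\mathfrak{p})$. Pick $j \in J$ with $j \notin \Rad(I+\mathfrak{p})$, so that $(I+\mathfrak{p}) \cap S_j = \emptyset$. For every $n$ there is an integer $k_n$ with $xJ^{k_n} \subseteq (I^n)^{(N)}_a$; in particular $xj^{k_n} \in (I^n)^{(N)}_a$, so $x \in S_j((I^n)^{(N)}_a)$. Taking the intersection over $n$ and invoking Proposition \ref{2.10} for $S_j$, $x$ lies in
\[
\bigcap\{\mathfrak{q} \in \mAss_R N : (I+\mathfrak{q}) \cap S_j = \emptyset\},
\]
a family to which $\mathfrak{p}$ belongs, so $x \in \mathfrak{p}$.

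For $(\supseteq)$, since $R$ is Noetherian I would write $J = (j_1, \dots, j_s)$. The key observation is that if $j_i \notin \Rad(I+\mathfrak{p})$ then a fortiori $J \nsubseteq \Rad(I+\mathfrak{p})$, so
\[
\{\mathfrak{p} \in \mAss_R N : j_i \notin \Rad(I+\mathfrak{p})\} \subseteq \{\mathfrak{p} \in \mAss_R N : J \nsubseteq \Rad(I+\mathfrak{p})\}.
\]
Hence if $x$ is in the right-hand side of the theorem, it lies in the intersection over the smaller set; Proposition \ref{2.10} applied to $S_{j_i}$ identifies that intersection with $\bigcap_n S_{j_i}((I^n)^{(N)}_a)$. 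Therefore, given $n$, for each $i$ I would obtain an exponent $a_i = a_i(n)$ with $xj_i^{a_i} \in (I^n)^{(N)}_a$. Setting $k := a_1 + \cdots + a_s$, any monomial $j_1^{e_1} \cdots j_s^{e_s}$ of total degree $k$ has some $e_{i_0} \geq a_{i_0}$ by pigeonhole and is therefore divisible by $j_{i_0}^{a_{i_0}}$; hence $xJ^k \subseteq (I^n)^{(N)}_a$, yielding $x \in ((I^n)^{(N)}_a :_R \langle J \rangle)$ for every $n$.

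The real work is the $(\supseteq)$ direction, where the task is to glue the single-generator saturations together into a saturation by $J$; this is where the pigeonhole step does the heavy lifting. The $(\subseteq)$ direction is a straightforward application of Proposition \ref{2.10} once one selects a witness $j \in J \setminus \Rad(I+\mathfrak{p})$.
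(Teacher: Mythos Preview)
Your argument is correct, and it follows a genuinely different route from the paper's. The paper does not glue together the single-generator saturations $S_{j_i}$; instead it uses the finiteness of $A^*_a(I,N)$ (Theorem~\ref{2.3}) to choose, by prime avoidance, a \emph{single} element $s\in J$ with $s\notin\bigcup\{\mathfrak p_i\in A^*_a(I,N):J\nsubseteq\mathfrak p_i\}$, and then argues that $((I^n)^{(N)}_a:_R\langle J\rangle)=S((I^n)^{(N)}_a)$ with $S=\{s^i\}$, reducing everything to one application of Proposition~\ref{2.10}. The remaining work in the paper is to verify that for $\mathfrak p\in\mAss_R N$ one has $J\subseteq\Rad(I+\mathfrak p)$ iff $s\in\Rad(I+\mathfrak p)$, which in turn requires showing that every minimal prime over $I+\mathfrak p$ lies in $A^*_a(I,N)$; this last step invokes Proposition~\ref{2.9}. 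Your approach is more elementary: it needs neither the finiteness of $A^*_a(I,N)$ nor Proposition~\ref{2.9}, only Proposition~\ref{2.10} and the pigeonhole step on the generators of $J$. The paper's approach, on the other hand, yields as a byproduct the inclusion $\mAss_R R/(I+\mathfrak p)\subseteq A^*_a(I,N)$ for $\mathfrak p\in\mAss_R N$ (recorded as a separate corollary), which your argument does not produce.
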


\begin{proof} In view of Theorem \ref{2.3}  the set  $A^*_a(I,N):= \bigcup _{n\geq1}
\Ass_RR/(I^n)^{(N)}_a$ is  finite. Let $A^*_a(I,N)=
\{\mathfrak{p}_1,\ldots, \mathfrak{p}_t\}.$ Let $r$ be an integer
such that $0\leq r\leq t$ and $J\nsubseteq\bigcup_{i=1}^r
\mathfrak{p}_i$ but $J\subseteq \bigcap_{i=r+1}^t \mathfrak{p}_j.$
Then there exists an element $s\in J$ such that
$s\not\in\bigcup_{i=1}^r \mathfrak{p}_i.$ Suppose $S= \{ s^{i}
\,\mid\, i\geq0 \}.$ Then it easily seen that
$((I^n)^{(N)}_a:_R \langle J\rangle)= S((I^n)^{(N)}_a)$ for each
integer $n\geq1.$ Now in view of  Proposition \ref{2.10} it is
enough to show that $J\subseteq {\rm Rad}(I+ \mathfrak{p})$ if and
only if $s\in {\rm Rad}(I+ \mathfrak{p})$ for each
$\mathfrak{p}\in \mAss_R N.$ To do this, as $s\in J$ one direction is clear. For other direction, let $\mathfrak{q}$ be a
 minimal prime ideal over $I+ \mathfrak{p}.$ Then, as $s\in {\rm Rad}(I+ \mathfrak{p})$ and $I+ \mathfrak{p}\subseteq \frak q$, we have
$s\in\mathfrak{q}$, and hence in view of the choice of $s$, it suffices to show that
$\mathfrak{q}\in A^*_a(I,N).$ By virtue of Lemma \ref{2.1}, we may
assume that $R$ is local with maximal ideal $\mathfrak{q}.$ Let
$x$ be as in the Proposition  \ref{2.9}. Then by Lemma \ref{2.1}, there
is an integer $n\geq1$ such that $x\not\in (I^n)^{(N)}_a.$ Now it
is easy to see that $\mathfrak{q}$ is  minimal over $
(I^n)^{(N)}_a+\mathfrak{p}.$ Therefore,  it
follows from  Proposition  \ref{2.9} that $\mathfrak{q}\in \Ass_R R/(I^n)^{(N)}_a,$  and so
$\mathfrak{q}\in A^*_a(I,N)$, as required.\\
\end{proof}

\begin{corollary}
Let $R$ be a Noetherian ring and $I$ an ideal of $R$. Let  $N$ be a finitely
generated $R$-module and $\mathfrak{p}\in \mAss_RN$.  Then  $\mAss_RR/(I+\frak p) \subseteq A^*_a(I,N).$
\end{corollary}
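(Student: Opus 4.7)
The plan is to reduce to the local case by localizing at an arbitrary $\mathfrak{q}\in\mAss_R R/(I+\mathfrak{p})$. Both sides behave well under localization: $\mAss_R R/(I+\mathfrak{p})$ trivially, and $A^*_a(I,N)$ by the remark in Definition~\ref{4.4}. So I may assume $(R,\mathfrak{q})$ is local, $\mathfrak{p}\in\mAss_R N$, and $\mathfrak{q}$ is minimal over $I+\mathfrak{p}$; the goal is then to exhibit an integer $n$ with $\mathfrak{q}\in\Ass_R R/(I^n)^{(N)}_a$, which by the monotonicity from Theorem~\ref{2.3} will place $\mathfrak{q}$ in the stable value $A^*_a(I,N)$.

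Next, I would apply Proposition~\ref{2.9}(i) to $(R,\mathfrak{q})$ and to the minimal associated prime $\mathfrak{p}\in\mAss_R N$ to obtain an element $x\in R\setminus\mathfrak{p}$ with the following property: for every ideal $J$ over which $\mathfrak{q}$ is minimal modulo $\mathfrak{p}$, either $x\in J+\Ann_R N$ or $\mathfrak{q}\in\Ass_R R/(J+\Ann_R N)$. Since $\mathfrak{p}\in\mAss_R N$ contains $\Ann_R N$ and $x\notin\mathfrak{p}$, we have $x\notin\Rad(\Ann_R N)$. The local form of Lemma~\ref{2.1}(v) gives $\bigcap_{n\geq 1}(I^n)^{(N)}_a=\Rad(\Ann_R N)$, so some $n\geq 1$ satisfies $x\notin(I^n)^{(N)}_a$.

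Set $J:=(I^n)^{(N)}_a$. By Lemma~\ref{2.1}(iii), $\Rad(J)=\Rad(I+\Ann_R N)$, hence $\Rad(J+\mathfrak{p})=\Rad(I+\mathfrak{p})$ since $\Ann_R N\subseteq\mathfrak{p}$; thus $\mathfrak{q}$ is still minimal over $J+\mathfrak{p}$. Moreover $\Ann_R N\subseteq J$ directly from the definition (for any $a\in\Ann_R N$ we have $aN=0\subseteq IN$), so $J+\Ann_R N=J$. The choice $x\notin(I^n)^{(N)}_a$ therefore rules out the first alternative in Proposition~\ref{2.9}(i), and what remains is precisely $\mathfrak{q}\in\Ass_R R/(I^n)^{(N)}_a$, which completes the argument.

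The main subtlety is forcing the ``either/or'' in Proposition~\ref{2.9}(i) onto the desired branch; this hinges on picking $n$ large enough to separate the witness $x$ from $(I^n)^{(N)}_a$, which is guaranteed by Lemma~\ref{2.1}(v) once we know $x\notin\Rad(\Ann_R N)$. Everything else is bookkeeping with radicals, the stability of $A^*_a(I,N)$ under localization, and the monotonicity of $\{\Ass_R R/(I^m)^{(N)}_a\}_{m\geq 1}$ from Theorem~\ref{2.3}.
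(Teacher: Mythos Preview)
Your proof is correct and follows essentially the same route as the paper, which simply refers back to the final paragraph of the proof of Theorem~\ref{2.11}: localize at $\mathfrak{q}$, invoke Proposition~\ref{2.9}(i) to obtain the witness $x\notin\mathfrak{p}$, use Lemma~\ref{2.1}(v) to find $n$ with $x\notin(I^n)^{(N)}_a$, and then feed $J=(I^n)^{(N)}_a$ back into Proposition~\ref{2.9}(i). Your write-up is in fact more explicit than the paper's in justifying $J+\Ann_R N=J$ and in citing the monotonicity of Theorem~\ref{2.3} to pass from a single $n$ to the stable value $A^*_a(I,N)$.
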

\begin{proof}
The assertion follows from the last argument in the proof of Theorem \ref{2.11}.
\end{proof}

\begin{corollary}\label{3.7}
Let $(R,\mathfrak{m})$  be a  local (Noetherian)  ring and let $N$ be a finitely
generated $R$-module. Then, for any proper ideal $I$ of $R$,

\[
\bigcap_{n\geq1}((I^n)^{(N)}_a:_R \langle \mathfrak{m}\rangle)=
\bigcap\{\mathfrak{p}\in \mAss_RN\,\mid\,{\rm Rad}(I+
\mathfrak{p})\subsetneqq \mathfrak{m}\}.
\]
\end{corollary}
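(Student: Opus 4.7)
The plan is to derive this immediately from Theorem \ref{2.11} by specializing the auxiliary ideal $J$ to the maximal ideal $\mathfrak{m}$. Applying that theorem with $J=\mathfrak{m}$ yields
\[
\bigcap_{n\geq1}((I^n)^{(N)}_a:_R \langle \mathfrak{m}\rangle)= \bigcap\{\mathfrak{p}\in \mAss_RN\,\mid\,\mathfrak{m}\nsubseteq {\rm Rad}(I+\mathfrak{p})\},
\]
so it only remains to identify the indexing set on the right with $\{\mathfrak{p}\in \mAss_RN \mid {\rm Rad}(I+\mathfrak{p})\subsetneqq \mathfrak{m}\}$.

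For that identification I would argue as follows. Because $R$ is local with maximal ideal $\mathfrak{m}$, every prime ideal is contained in $\mathfrak{m}$; in particular, for any $\mathfrak{p}\in \mAss_R N$ we have $\mathfrak{p}\subseteq \mathfrak{m}$. Since $I$ is proper, $I\subseteq \mathfrak{m}$ as well, so $I+\mathfrak{p}\subseteq \mathfrak{m}$ and therefore ${\rm Rad}(I+\mathfrak{p})\subseteq \mathfrak{m}$. Under this inclusion, the condition $\mathfrak{m}\nsubseteq {\rm Rad}(I+\mathfrak{p})$ is equivalent to the strict containment ${\rm Rad}(I+\mathfrak{p})\subsetneqq \mathfrak{m}$.

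Substituting this equivalence into the indexing set of the displayed intersection gives exactly the right-hand side of the corollary, completing the proof. There is no substantive obstacle here: once Theorem \ref{2.11} is available the argument is a one-line bookkeeping step that exploits the locality of $R$ and the properness of $I$ to rule out the only alternative to strict containment, namely ${\rm Rad}(I+\mathfrak{p})=\mathfrak{m}$.
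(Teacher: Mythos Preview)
Your proposal is correct and follows exactly the paper's approach: the paper's proof simply reads ``The assertion follows from Theorem \ref{2.11},'' and you have spelled out the straightforward specialization $J=\mathfrak{m}$ together with the observation that locality and properness of $I$ force $\Rad(I+\mathfrak{p})\subseteq \mathfrak{m}$, making $\mathfrak{m}\nsubseteq \Rad(I+\mathfrak{p})$ equivalent to strict containment.
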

\begin{proof}
 The assertion follows from Theorem \ref{2.11}.\\
\end{proof}
\begin{proposition} \label{2.13}
Let $(R,\mathfrak{m})$ be a local (Noetherian) ring, $I$ a proper ideal of
$R$ and $N$ a  finitely generated $R$-module. Then the
following conditions are equivalent:
\begin{itemize}
\item[(i)] For all $\mathfrak{p}\in \mAss_RN,
\Rad(I+\mathfrak{p})\neq\mathfrak{m}.$

\item[(ii)] $\bigcap_{n\geq1}(((I+\Ann_RN)^n)_a:_R\langle
\mathfrak{m}\rangle)\subseteq {\rm Rad}(\Ann_RN).$

\item[(iii)] $\bigcap_{n\geq1}((I+\Ann_RN)^n:_R\langle
\mathfrak{m}\rangle)\subseteq {\rm Rad}(\Ann_RN).$

\item[(iv)] $\bigcap_{n\geq1}((I^n)^{(N)}_a:_R \langle
\mathfrak{m}\rangle)= \Rad(\Ann_RN).$
\end{itemize}
\end{proposition}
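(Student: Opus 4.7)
The plan is to prove the cycle of implications $(i) \Leftrightarrow (iv)$, $(iv) \Rightarrow (ii)$, $(ii) \Rightarrow (iii)$, and $(iii) \Rightarrow (i)$, which together establish the equivalence of all four conditions.

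First I handle $(i) \Leftrightarrow (iv)$ by reading off Corollary \ref{3.7}. Because $I$ is proper and $R$ is local, $I + \mathfrak{p} \subseteq \mathfrak{m}$ for every $\mathfrak{p} \in \mAss_R N$, so the condition $\Rad(I+\mathfrak{p}) \neq \mathfrak{m}$ is equivalent to $\Rad(I+\mathfrak{p}) \subsetneq \mathfrak{m}$. Under (i) the index set on the right-hand side of Corollary \ref{3.7} equals all of $\mAss_R N$, and the intersection collapses to $\bigcap_{\mathfrak{p}\in \mAss_R N}\mathfrak{p} = \Rad(\Ann_R N)$, which is (iv). Conversely, if (iv) holds but some $\mathfrak{p}_0 \in \mAss_R N$ has $\Rad(I+\mathfrak{p}_0) = \mathfrak{m}$, then $\mathfrak{p}_0$ is omitted from the index set of Corollary \ref{3.7}; since minimal primes of $\Ann_R N$ are pairwise incomparable, prime avoidance yields an element of $\bigcap_{\mathfrak{p}\in \mAss_R N,\, \mathfrak{p}\neq \mathfrak{p}_0}\mathfrak{p}$ lying outside $\mathfrak{p}_0$, making the left-hand side of (iv) strictly larger than $\Rad(\Ann_R N)$, a contradiction.

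For $(iv) \Rightarrow (ii)$, Lemma \ref{2.1}(iii) identifies both $((I+\Ann_R N)^n)^{(N)}_a$ and $(I^n)^{(N)}_a$ with the preimage in $R$ of $(\widetilde{I}^n)_a$ computed in $\widetilde{R} = R/\Ann_R N$, so they coincide. Combined with Lemma \ref{2.1}(i), which gives $((I+\Ann_R N)^n)_a \subseteq ((I+\Ann_R N)^n)^{(N)}_a$, this yields the inclusion $((I+\Ann_R N)^n)_a \subseteq (I^n)^{(N)}_a$ for each $n$. Taking the $\langle \mathfrak{m}\rangle$-saturation and intersecting places the left-hand side of (ii) inside the left-hand side of (iv), which by hypothesis equals $\Rad(\Ann_R N)$. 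The implication $(ii) \Rightarrow (iii)$ is immediate from $(I+\Ann_R N)^n \subseteq ((I+\Ann_R N)^n)_a$.

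The main obstacle is $(iii) \Rightarrow (i)$, for which I would argue contrapositively and then reduce to the classical result in $\widetilde{R} = R/\Ann_R N$. Suppose $\mathfrak{p}_0 \in \mAss_R N$ satisfies $\Rad(I+\mathfrak{p}_0) = \mathfrak{m}$; then $\widetilde{\mathfrak{p}_0} \in \mAss \widetilde{R}$ satisfies $\Rad(\widetilde{I}+\widetilde{\mathfrak{p}_0}) = \widetilde{\mathfrak{m}}$. The classical theorem of Schenzel \cite{Sc2} in the Noetherian local ring $\widetilde{R}$---which characterizes, in terms of exactly this radical condition on minimal primes, when $\bigcap_n(\widetilde{I}^n :_{\widetilde{R}} \langle \widetilde{\mathfrak{m}}\rangle)$ is contained in the nilradical of $\widetilde{R}$---produces a non-nilpotent $\widetilde{y} \in \bigcap_n(\widetilde{I}^n :_{\widetilde{R}} \langle \widetilde{\mathfrak{m}}\rangle)$. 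The delicate part is lifting $\widetilde{y}$ back to an element $y \in R \setminus \Rad(\Ann_R N)$ lying in $\bigcap_n((I+\Ann_R N)^n :_R \langle \mathfrak{m}\rangle)$, using the identity $(I+\Ann_R N)^n + \Ann_R N = I^n + \Ann_R N$ to pass between the ideal $(I+\Ann_R N)^n$ of $R$ and its image $\widetilde{I}^n$ in $\widetilde{R}$; the lift must be chosen so that, after absorbing the nilpotent contributions coming from $\Ann_R N$, the stronger containment $y \mathfrak{m}^{M_n} \subseteq (I+\Ann_R N)^n$ actually holds for every $n$, rather than merely $y \mathfrak{m}^{M_n} \subseteq (I+\Ann_R N)^n + \Ann_R N$. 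This yields the desired contradiction with (iii).
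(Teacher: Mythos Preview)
Your argument for (i)$\Leftrightarrow$(iv), (iv)$\Rightarrow$(ii), and (ii)$\Rightarrow$(iii) is correct and matches the paper's route. The divergence is at the closing implication: the paper argues (iii)$\Rightarrow$(iv) directly by invoking the special element $x$ of Proposition~\ref{2.9}, rather than reducing to the $N=R$ case in $\widetilde R=R/\Ann_RN$ via Schenzel's theorem as you propose.

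The lifting step you flag as ``delicate'' is a genuine gap, and in fact it cannot be repaired, because the implication (iii)$\Rightarrow$(i) is false as stated. Take $R=k[[x,y,z]]$, $N=R/(xy)$, and $I=(y,z)$. Then $\mAss_RN=\{(x),(y)\}$ and $\Rad(I+(x))=\mathfrak m$, so (i) fails. On the other hand $I+\Ann_RN=(y,z)$; each $(y,z)^n$ is $(y,z)$-primary and $x^k\notin(y,z)$, so $(y,z)^n:_R\langle\mathfrak m\rangle=(y,z)^n$, whence $\bigcap_n(y,z)^n=0\subseteq(xy)=\Rad(\Ann_RN)$ and both (ii) and (iii) hold. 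The paper's own proof of (iii)$\Rightarrow$(iv) stumbles on the same point: it checks that $\mathfrak m\notin\Ass_RR/J_n$ for $J_n=(I+\Ann_RN)^n:_R\langle\mathfrak m\rangle$ and then appeals to Proposition~\ref{2.9}(i), but that proposition only yields $x\in J_n+\Ann_RN$ under the hypothesis $\mathfrak m\notin\Ass_RR/(J_n+\Ann_RN)$; in the example one has $J_2=(y,z)^2$, $J_2+\Ann_RN=(y^2,yz,z^2,xy)$, and $((y^2,yz,z^2,xy):_Ry)=\mathfrak m$, so $\mathfrak m\in\Ass_RR/(J_2+\Ann_RN)$. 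Your instinct that the passage from $(I+\Ann_RN)^n+\Ann_RN$ back down to $(I+\Ann_RN)^n$ is the obstruction is exactly right; no choice of lift will succeed.
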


\begin{proof} (i) $\Longrightarrow$ (ii): In view of  Corollary \ref{3.7},
\[
\bigcap_{n\geq1}((I^n)^{(N)}_a:_R \langle\mathfrak{m}\rangle)= \Rad(\Ann_RN).
\]
Hence as
\[
\bigcap_{n\geq1}(((I+\Ann_RN)^n)_a:_R\langle \mathfrak{m}\rangle)
\subseteq \bigcap_{n\geq1}((I^n)^{(N)}_a:_R \langle
\mathfrak{m}\rangle),
\]
 it follows that (ii) holds.  The implication (ii) $\Longrightarrow$ (iii) is trivial.

In order to show that (iii) $\Longrightarrow$ (iv),  suppose the contrary, that is (iv) is not
true. Then $$\Rad(\Ann_RN) \subsetneqq
\bigcap_{n\geq1}((I^n)^{(N)}_a:_R \langle \mathfrak{m}\rangle).$$
Hence, according to Corollary 3.7, there exists $\mathfrak{p}\in
\mAss_RN$ such that $\Rad(I+ \mathfrak{p})= \mathfrak{m}.$ Moreover, applying
the assumption it is easily seen that $\Rad(I+ \Ann_RN) \neq
\mathfrak{m}.$  Therefore  $$\Rad((I+ \Ann_RN)^n:_R\langle \mathfrak{m} \rangle + \mathfrak{p})=
\mathfrak{m},$$  for each integer $n\geq1$.

 Now, let $x$ be as in the Proposition \ref{2.9}. Since
$\mathfrak{m}\not\in \Ass_R R/((I+\Ann_RN)^n:_R\langle \mathfrak{m}
\rangle)$ it follows that
$$x \in \bigcap_{n\geq1}((I+\Ann_RN)^n:_R\langle \mathfrak{m}\rangle).$$
 Thus
$x\in {\rm \Rad}(\Ann_RN)$, i.e.,  $x\in \mathfrak{p}$ which is a
contradiction.

 Finally, the implication (iv) $\Longrightarrow$ (i) follows  from Corollary 3.7.
\end{proof}

\begin{theorem} \label{2.14}
Let $(R,\mathfrak{m})$ be a  local (Noetherian)  ring, let $N$ be a finitely generated $R$-module, and let $I$ be an ideal of
$R.$ Then the following conditions are equivalent:
\begin{itemize}
\item[(i)] $\bigcap_{n\geq1}((I^n \hat{R})^{(\hat{N})}_a:_{\hat{R}}
\langle \mathfrak{m} \hat{R} \rangle)= \Rad (\Ann_ {\hat{R}}
\hat{N}).$

\item[(ii)] For all integers $n\geq1$ there exists an integer
$k\geq1$ such that
\[
(I+\Ann_RN)^k:_R\langle \mathfrak{m}\rangle \subseteq
(\mathfrak{m}^n)^{(N)}_a.
\]

\item[(iii)] For all integers $n\geq1$ there exists an integer
$k\geq1$ such that
\[
((I+\Ann_RN)^k)_a:_R\langle \mathfrak{m}\rangle \subseteq
(\mathfrak{m}^n)^{(N)}_a.
\]

\item[(iv)] For all integers $n\geq1$ there exists an integer
$k\geq1$ such that
\[
(I^k)^{(N)}_a:_R\langle \mathfrak{m} \rangle \subseteq
(\mathfrak{m}^n)^{(N)}_a.
\]
\end{itemize}
\end{theorem}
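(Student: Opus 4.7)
The plan is to prove the cycle $(\text{iv})\Rightarrow(\text{iii})\Rightarrow(\text{ii})\Rightarrow(\text{i})\Rightarrow(\text{iv})$. The first two implications are cosmetic; the third passes through the $\mathfrak{m}$-adic completion; the last combines Propositions \ref{3.1} and \ref{2.13}. The main technical obstacle is the compatibility of the asymptotic colon $:_R\langle\mathfrak{m}\rangle$ with completion, which enters only in $(\text{ii})\Rightarrow(\text{i})$.

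For $(\text{iv})\Rightarrow(\text{iii})\Rightarrow(\text{ii})$, I would first note the inclusions $(I+\Ann_RN)^k \subseteq ((I+\Ann_RN)^k)_a \subseteq (I^k)^{(N)}_a$, the rightmost following from Lemma \ref{2.1}(iii) applied in $R/\Ann_RN$. Monotonicity of colon by $\langle\mathfrak{m}\rangle$ then gives both implications with the same $k$ for each $n$.

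For $(\text{ii})\Rightarrow(\text{i})$, I would pass to $\hat R$. Since $R$ is Noetherian each chain $\{J:_R \mathfrak{m}^i\}_i$ stabilizes, so flatness of $\hat R$ (together with $\Ann_RN\cdot\hat R=\Ann_{\hat R}\hat N$) yields
\[
(J:_R \langle\mathfrak{m}\rangle)\hat R = J\hat R :_{\hat R} \langle\mathfrak{m}\hat R\rangle,
\]
while the integral-dependence argument of Lemma \ref{2.7} gives $(\mathfrak{m}^n)^{(N)}_a\hat R \subseteq (\mathfrak{m}^n\hat R)^{(\hat N)}_a$. Applying these to (ii), intersecting over $n$, and using Lemma \ref{2.1}(v) to identify $\bigcap_n(\mathfrak{m}^n\hat R)^{(\hat N)}_a = \Rad(\Ann_{\hat R}\hat N)$, I obtain
\[
\bigcap_k\bigl((I\hat R+\Ann_{\hat R}\hat N)^k :_{\hat R}\langle\mathfrak{m}\hat R\rangle\bigr) \subseteq \Rad(\Ann_{\hat R}\hat N),
\]
which is condition (iii) of Proposition \ref{2.13} applied to the data $(\hat R,\hat N,I\hat R)$. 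The equivalence $(\text{iii})\Leftrightarrow(\text{iv})$ of Proposition \ref{2.13} then delivers (i).

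For $(\text{i})\Rightarrow(\text{iv})$, I would read (i) through Proposition \ref{2.13} applied to $(\hat R,\hat N,I\hat R)$: it translates to $\Rad(I\hat R+\mathfrak{q})\neq \mathfrak{m}\hat R$ for every $\mathfrak{q}\in \mAss_{\hat R}\hat N$, i.e.\ $\mathfrak{m}\notin \bar{Q}^*(I,N)$. Proposition \ref{3.1} with $\mathfrak{p}=\mathfrak{m}$ then supplies, for every $n$, an integer $k$ with $(I^k)^{(N)}_a :_R \langle\mathfrak{m}\rangle \subseteq (\mathfrak{m}^n)^{\langle N\rangle}_a$. Since $R$ is local, $R\setminus \mathfrak{m}$ is the unit group of $R$, hence $(\mathfrak{m}^n)^{\langle N\rangle}_a = (\mathfrak{m}^n)^{(N)}_a$, giving (iv).
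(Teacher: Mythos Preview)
Your argument is correct, and the trivial chain $(\text{iv})\Rightarrow(\text{iii})\Rightarrow(\text{ii})$ as well as the step $(\text{ii})\Rightarrow(\text{i})$ (intersect and feed into Proposition~\ref{2.13} over the completion) match the paper's treatment. The paper streamlines the bookkeeping by first reducing all four conditions to the complete case via faithful flatness, whereas you carry the passage to $\hat R$ explicitly inside $(\text{ii})\Rightarrow(\text{i})$; this is only a cosmetic difference.

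The genuine divergence is in $(\text{i})\Rightarrow(\text{iv})$. The paper, working over the complete ring, invokes Chevalley's Theorem: from
\[
\bigcap_{k\geq1}\bigl((I^k)^{(N)}_a:_R\langle\mathfrak{m}\rangle\bigr)/\Rad(\Ann_RN)=0
\]
in the complete ring $R/\Rad(\Ann_RN)$, Chevalley produces, for each $n$, an integer $k$ with $(I^k)^{(N)}_a:_R\langle\mathfrak{m}\rangle\subseteq\mathfrak{m}^n+\Rad(\Ann_RN)\subseteq(\mathfrak{m}^n)^{(N)}_a$. Your route instead reads (i) as condition (iv) of Proposition~\ref{2.13} over $\hat R$, extracts $\mathfrak{m}\notin\bar Q^*(I,N)$, and then applies the contrapositive of $(\text{v})\Rightarrow(\text{i})$ in Proposition~\ref{3.1}. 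This is logically sound: Proposition~\ref{3.1} is proved in the paper using only Proposition~\ref{2.9}, Lemma~\ref{2.7}, and Proposition~\ref{2.13}, none of which rely on the present theorem, so there is no circularity---only a forward reference in the paper's ordering. A pleasant by-product is that your chain nowhere calls on Chevalley's Theorem explicitly (nor does the proof of $(\text{v})\Rightarrow(\text{i})$ in Proposition~\ref{3.1}), so you exhibit Theorem~\ref{2.14} as a formal consequence of Propositions~\ref{2.13} and~\ref{3.1}. The paper's approach, on the other hand, is self-contained at this point in the exposition and makes transparent that Chevalley's Theorem is the analytic engine behind the implication.
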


\begin{proof} Without loss of
generality we may assume that $(R,\mathfrak{m})$ is a complete
local ring as follows by virtue of the faithfully flatness of
$\hat{R}$. Now suppose that (i) is satisfied, then
\[
\bigcap_{n\geq1}((I^n)^{(N)}_a:_R \langle \mathfrak{m} \rangle/
\Rad(\Ann_AN))= 0.
\]
As $R/\Ann_RN$ is a complete local ring, Chevalley's Theorem (see
\cite[Theorem 30.1]{Na}) says that for all $n\geq1$ there exists an
integer $k\geq1$ such that
\[
((I^k)^{(N)}_a:_R \langle \mathfrak{m}
\rangle)/\Rad(\Ann_AN)\subseteq (\mathfrak{m}/\Rad(\Ann_RN))^n.
\]
Therefore
\[
((I^k)^{(N)}_a:_R \langle \mathfrak{m}\rangle) \subseteq
\mathfrak{m}^n+ \Rad(\Ann_RN) \subseteq
(\mathfrak{m}^n)^{(N)}_a,
\]
and so the statement (iv) is shown to be true.

The conclusions (iv) $\Longrightarrow$ (iii) and (iii) $ \Longrightarrow$
(ii) are obviously true. So, in order to complete the proof, it is
enough to show that (ii) $\Longrightarrow$ (i). To this end, suppose
that for all $n\geq1$ there exists an integer $k\geq1$ such that
$$(I+\Ann_RN)^k:_R\langle \mathfrak{m}\rangle \subseteq
(\mathfrak{m}^n)^{(N)}_a.$$ Then  in view of  Lemma \ref{2.1}  we have
$$\bigcap_{k\geq1}((I+\Ann_RN)^k:_R \langle \mathfrak{m}\rangle)
\subseteq \Rad(\Ann_RN).$$  Now use Proposition 3.8 to
complete the proof.
\end{proof}

We are now ready to  prove the first main result of this section. In fact there  is  a  characterization of
the quintasymptotic prime ideals of $I$ with respect to $N$, which is a generalization of \cite[Proposition 3.5]{Mc2}.
\begin{proposition} \label{3.1}
Let $R$ be a Noetherian ring and let $N$ be a  finitely
generated $R$-module. Let $I \subseteq \mathfrak{p}$
be ideals of $R$ such that $\mathfrak{p}\in \Supp(N).$ Then the
following conditions are equivalent:
\begin{itemize}
\item[(i)] $\mathfrak{p}\in \bar{Q}^*(I, N).$

\item[(ii)] There exists an integer $k\geq0$ such that
$\mathfrak{p}\in \Ass_R R/J+\Ann_RN$ for any ideal $J$ of $R$ with
$I\subseteq \Rad(J)$ and $J\subseteq (\mathfrak{p}^k)^{\langle N \rangle}_a.$

\item[(iii)]  There exists an integer $k \geq 0$ such that for all
integers $m \geq 0$
\[
(I+ \Ann_RN)^m :_R \langle \mathfrak{p}\rangle \nsubseteq
(\mathfrak{p}^k)^{\langle N \rangle}_a.
\]

\item[(iv)]  There exists an integer $k \geq 0$ such that for all
integers $m \geq 0$
\[
((I+ \Ann_RN)^m)_a :_R \langle \mathfrak{p}\rangle \nsubseteq
(\mathfrak{p}^k)^{\langle N \rangle}_a.
\]

\item[(v)]  There exists an integer $k \geq 0$ such that for all
integers $m \geq 0$
\[
(I^m)^{(N)}_a :_R \langle \mathfrak{p}\rangle \nsubseteq
(\mathfrak{p}^k)^{\langle N \rangle}_a.
\]
\end{itemize}
\end{proposition}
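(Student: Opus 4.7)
The plan is to establish the equivalences through two interlocking cycles: a main loop (i) $\Rightarrow$ (iii) $\Rightarrow$ (iv) $\Rightarrow$ (v) $\Rightarrow$ (i) together with a side loop (i) $\Rightarrow$ (ii) $\Rightarrow$ (v). The implications (iii) $\Rightarrow$ (iv) $\Rightarrow$ (v) are immediate from the containments $(I+\Ann_RN)^m\subseteq((I+\Ann_RN)^m)_a\subseteq(I^m)^{(N)}_a$ (the second coming from Lemma \ref{2.1}(i), (iii) applied to the ideal $(I+\Ann_RN)^m$, which gives $((I+\Ann_RN)^m)^{(N)}_a=(I^m)^{(N)}_a$); saturating by $\langle\mathfrak{p}\rangle$ preserves these inclusions, so the property of not being contained in $(\mathfrak{p}^k)^{\langle N\rangle}_a$ transfers from the smaller to the larger ideal on the left.

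The heart of the argument is that each of conditions (iii), (iv), (v) in $R$ is equivalent to its analogue formulated in $\hat R_\mathfrak{p}$, via faithfully flat descent along the chain $R\to R_\mathfrak{p}\to\hat R_\mathfrak{p}$. Using the standard flat base change identity $(K:_RJ)S=KS:_SJS$ (valid for $S$ flat over $R$ and $J$ finitely generated), one obtains
\[
((I+\Ann_RN)^m:_R\langle\mathfrak{p}\rangle)\hat R_\mathfrak{p}=(I\hat R_\mathfrak{p}+\Ann\hat N_\mathfrak{p})^m:_{\hat R_\mathfrak{p}}\langle\mathfrak{p}\hat R_\mathfrak{p}\rangle,
\]
with analogous identities for $((I+\Ann_RN)^m)_a$ and $(I^m)^{(N)}_a$ in place of $(I+\Ann_RN)^m$. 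Combined with the inclusion $(\mathfrak{p}^k)^{\langle N\rangle}_a\hat R_\mathfrak{p}\subseteq((\mathfrak{p}\hat R_\mathfrak{p})^k)^{(\hat N_\mathfrak{p})}_a$ and its contraction counterpart $R\cap((\mathfrak{p}\hat R_\mathfrak{p})^k)^{(\hat N_\mathfrak{p})}_a=(\mathfrak{p}^k)^{\langle N\rangle}_a$ (the latter from Lemma \ref{2.7} applied to the faithfully flat extension $R_\mathfrak{p}\to\hat R_\mathfrak{p}$ together with Lemma \ref{2.1}(iv)), this yields the claimed equivalence for each of (iii), (iv), (v). Now $\mathfrak{p}\in\bar{Q}^*(I,N)$ is by definition the negation of condition (i) of Proposition \ref{2.13} applied in the complete local ring $\hat R_\mathfrak{p}$; Proposition \ref{2.13} and Theorem \ref{2.14} then render it equivalent to the negation of each of Theorem \ref{2.14}(ii)--(iv) in $\hat R_\mathfrak{p}$, i.e., to (iii), (iv), (v) in $\hat R_\mathfrak{p}$. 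This establishes (i) $\Leftrightarrow$ (iii) $\Leftrightarrow$ (iv) $\Leftrightarrow$ (v).

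For (i) $\Rightarrow$ (ii), choose $\mathfrak{q}\in\mAss_{\hat R_\mathfrak{p}}\hat N_\mathfrak{p}$ with $\Rad(I\hat R_\mathfrak{p}+\mathfrak{q})=\mathfrak{p}\hat R_\mathfrak{p}$ and let $k$ be the integer provided by Proposition \ref{2.9}(ii) applied in $\hat R_\mathfrak{p}$ to this $\mathfrak{q}$. For any $J\subseteq R$ satisfying the hypothesis of (ii), the extension $J\hat R_\mathfrak{p}$ lies in $((\mathfrak{p}\hat R_\mathfrak{p})^k)^{(\hat N_\mathfrak{p})}_a$ (via the extension inclusion recorded above), while $I\subseteq\Rad J$ combined with our choice of $\mathfrak{q}$ forces $\Rad(J\hat R_\mathfrak{p}+\mathfrak{q})=\mathfrak{p}\hat R_\mathfrak{p}$; Proposition \ref{2.9}(ii) therefore yields $\mathfrak{p}\hat R_\mathfrak{p}\in\Ass_{\hat R_\mathfrak{p}}\hat R_\mathfrak{p}/(J\hat R_\mathfrak{p}+\Ann\hat N_\mathfrak{p})$, and faithful flatness of $\hat R_\mathfrak{p}$ over $R_\mathfrak{p}$, together with the identity $\Ann\hat N_\mathfrak{p}=(\Ann_RN)\hat R_\mathfrak{p}$, descends this to $\mathfrak{p}\in\Ass_RR/(J+\Ann_RN)$.

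Finally, for (ii) $\Rightarrow$ (v), suppose (ii) holds with some $k$ but (v) fails, so there exists $m$ with $J:=(I^m)^{(N)}_a:_R\langle\mathfrak{p}\rangle\subseteq(\mathfrak{p}^k)^{\langle N\rangle}_a$. By Lemma \ref{2.1}(ii), $(I^m)^{(N)}_a\supseteq 0^{(N)}_a=\Rad(\Ann_RN)\supseteq\Ann_RN$, so $J\supseteq\Ann_RN$ and hence $J+\Ann_RN=J$. Since $I\subseteq\Rad J$ (as $J\supseteq(I^m)^{(N)}_a$ whose radical equals $\Rad(I+\Ann_RN)\supseteq I$ by Lemma \ref{2.1}(iii)), condition (ii) forces $\mathfrak{p}\in\Ass_RR/J$; but the primary-decomposition formula for the $\langle\mathfrak{p}\rangle$-saturation gives $\Ass_RR/J=\{\mathfrak{q}\in\Ass_RR/(I^m)^{(N)}_a:\mathfrak{p}\nsubseteq\mathfrak{q}\}$, which cannot contain $\mathfrak{p}$ itself, a contradiction. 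The main bookkeeping obstacle throughout is tracking how $\langle\cdot\rangle$-saturations and integral closures interact with the flat maps $R\to R_\mathfrak{p}\to\hat R_\mathfrak{p}$; all the relevant identities are supplied by Lemmas \ref{2.1} and \ref{2.7} together with flat base change for colons.
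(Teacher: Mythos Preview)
Your proof is correct and uses the same core ingredients as the paper (Proposition~\ref{2.9}, Lemma~\ref{2.7}, Proposition~\ref{2.13}, Theorem~\ref{2.14}), but the organization differs. The paper runs a single cycle (i)$\Rightarrow$(ii)$\Rightarrow$(iii)$\Rightarrow$(iv)$\Rightarrow$(v)$\Rightarrow$(i), with (v)$\Rightarrow$(i) handled by a direct passage to $\hat R_{\mathfrak p}$ via Lemma~\ref{2.7} and Proposition~\ref{2.13}; you instead make Theorem~\ref{2.14} the hub, transferring (iii)--(v) wholesale to $\hat R_{\mathfrak p}$ and reading off the equivalence with (i) there, and then attach (ii) via a side loop. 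Your (ii)$\Rightarrow$(v) is arguably cleaner than the paper's terse (ii)$\Rightarrow$(iii), since $(I^m)^{(N)}_a$ already contains $\Ann_R N$, so that $J+\Ann_R N=J$ is immediate and the saturation argument applies directly.

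One small caveat: your claim of ``analogous identities'' for the extensions of $((I+\Ann_R N)^m)_a$ and $(I^m)^{(N)}_a$ to $\hat R_{\mathfrak p}$ asserts \emph{equality}, but Lemma~\ref{2.7} only gives the contraction identity, and integral closure of ideals need not commute with completion in the sense that the extension of the closure equals the closure of the extension. This does not damage your argument, because your cycle only requires the easy inclusion $L_m\hat R_{\mathfrak p}\subseteq\hat L_m$ for the (v)-step (going from $R$ up to $\hat R_{\mathfrak p}$) together with the genuine equality for the (iii)-step (coming back down), and the latter holds since plain powers of ideals extend exactly under flat base change. It would be worth rephrasing that sentence to reflect this asymmetry.
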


\begin{proof} (i) $\Longrightarrow$ (ii): Let $\mathfrak{p}\in
\bar{Q}^*(I, N).$ Then there exists a prime ideal $\mathfrak{q}\in
\mAss_{\hat{R_{\mathfrak{p}}}}\hat{N_{\mathfrak{p}}}$ such that
$\Rad(I\hat{R_{\mathfrak{p}}}+ \mathfrak{q})=
\mathfrak{p}\hat{R_{\mathfrak{p}}}.$ Now, let $k$ be as in the
Proposition \ref{2.9}(ii), applied to $\mathfrak{q}\in
\mAss_{\hat{R_{\mathfrak{p}}}}\hat{N_{\mathfrak{p}}}.$ Let $J$ be
any ideal of $R$ such that $I\subseteq \Rad(J)$ and $J\subseteq
(\mathfrak{p}^k)^{\langle N \rangle}_a.$ Then $I\hat{R_{\mathfrak{p}}} \subseteq
\Rad(J\hat{R_{\mathfrak{p}}})$ and
$J\hat{R_{\mathfrak{p}}}\subseteq (\mathfrak{p}^k
\hat{R_{\mathfrak{p}}})^{(\hat{N_{\mathfrak{p}}})}_a$ by virtue of
Lemma \ref{2.7}. Since $\mathfrak{p}\hat{R_{\mathfrak{p}}}$ is the
maximal ideal of $\hat{R_{\mathfrak{p}}}$, it follows that
$(\mathfrak{p}\hat{R_{\mathfrak{p}}})^{(\hat{N_{\mathfrak{p}}})}_a=
\mathfrak{p}\hat{R_{\mathfrak{p}}}$, and so
$J\hat{R_{\mathfrak{p}}}$ is a proper ideal of
$\hat{R_{\mathfrak{p}}}.$ Thus $\Rad(J\hat{R_{\mathfrak{p}}}+
\mathfrak{q})= \mathfrak{p}\hat{R_{\mathfrak{p}}}$.  Hence  Proposition
\ref{2.9} shows that
$\mathfrak{p}\hat{R_{\mathfrak{p}}}\in \Ass_{\hat {R_{\mathfrak{p}}}} \hat {R_{\mathfrak{p}}}/J\hat {R_{\mathfrak{p}}}+\Ann_{\hat {R_{\mathfrak{p}}}}\hat {N_{\mathfrak{p}}}$,
and so by \cite[Theorem 23.2]{Ma} we have $\mathfrak{p}\in \Ass_R R/J+\Ann_RN$. That is (ii) holds.

The implication (ii) $\Longrightarrow$ (iii) follows easily from the fact that  $$\mathfrak{p}\not\in \Ass_R R/((I+ \Ann_RN)^n :_R\langle
\mathfrak{p}\rangle),$$ for all integers $n \geq 0.$

The conclusions (iii) $\Longrightarrow$ (iv) and (iv) $\Longrightarrow$ (v) are obviously
true.

Finally, in order to complete the proof, we have to show the
implication (v) $\Longrightarrow$ (i). To this end, suppose that  there is an
integer $k \geq 0$ such that
$((I^m)^{(N)}_a :_R \langle \mathfrak{p}\rangle) \nsubseteq
(\mathfrak{p}^k)^{\langle N \rangle}_a$, for all integers $m \geq 0$.  Then by Lemma \ref{2.7},
$$((I^m\hat{R_{\mathfrak{p}}})^{(\hat{N_{\mathfrak{p}}})}_a :_
{\hat{R_{\mathfrak{p}}}} \langle \mathfrak{p}\hat{R_{\mathfrak{p}}}\rangle) \nsubseteq
(\mathfrak{p}^k\hat{R_{\mathfrak{p}}})^{(\hat{N_{\mathfrak{p}}})}_a.$$
Hence, in view of  Proposition \ref{2.13}, there is a
$\mathfrak{q}\in \mAss_{\hat{R_{\mathfrak{p}}}}
\hat{N_{\mathfrak{p}}}$ such that $\Rad(I\hat{R_{\mathfrak{p}}}+
\mathfrak{q})= \mathfrak{p}\hat{R_{\mathfrak{p}}}$, and so
$\mathfrak{p}\in \bar{Q}^*(I, N)$, as required.
\end{proof}

We are now ready to state and prove the second main theorem of this section, which is  a characterization of  the equivalence  between the topologies  $\{(I^n)_a^{(N)}\}_{n\geq1}$,
$\{S((I^n)_a^{(N)})\}_{n\geq1}$, $\{S(((I+ \Ann_RN)^n)_a)\}_{n\geq 1}$  and $\{S((I+ \Ann_RN)^n)\}_{n\geq 1}$   in terms of
the quintasymptotic primes of $I$ with respect to $N$.  This will  generalize  the main result of   McAdam \cite{Mc2}.
\begin{theorem}\label{3.2}
Let $R$ be a Noetherian ring,  $N$  a  finitely
generated $R$-module and $I$ an ideal of $R.$ Then for any
multiplicatively closed subset $S$ of $R$ the following
are equivalent:
\begin{itemize}
\item[(i)] $S \subseteq R\setminus\bigcup\{\mathfrak{p}\in
\bar{Q}^*(I, N)\}.$

\item[(ii)] The topologies defined by $\{S((I^n)^{(N)}_a)\}_{n
\geq 0}$ and $\{(I^n)^{(N)}_a \}_{n \geq 0}$ are equivalent.

\item[(iii)] The topology defined by $\{S(((I+ \Ann_RN)^n)_a)\}_{n
\geq 0}$ is finer than the topology defined by $\{(I^n)^{(N)}_a
\}_{n \geq 0}.$

\item[(iv)] The topology defined by $\{S((I+ \Ann_R N)^n) \}_{n
\geq 0}$ is finer than the topology defined by $\{(I^n)^{(N)}_a
\}_{n\geq0}.$

\item[(v)] For all $\mathfrak{p}\in \Supp(N) \cap V(I)$ the
topology defined by $\{S((I^n)^{(N)}_a)\}_{n\geq0}$ is finer than
the topology defined by $\{(\mathfrak{p}^n)^{\langle N \rangle}_a)\}_{n \geq
0}.$

\item[(vi)] For all $\mathfrak{p}\in \Supp(N) \cap V(I)$ the
topology defined by $\{S(((I+ \Ann_RN)^n)_a)\}_{n \geq 0}$
is finer than the topology defined by
$\{(\mathfrak{p}^n)^{\langle N \rangle}_a)\}_{n \geq 0}.$

\item[(vii)] For all $\mathfrak{p}\in \Supp(N) \cap V(I)$ the
topology defined by $\{S((I+\Ann_R N)^n) \}_{n \geq 0}$
is finer than the topology defined by
$\{(\mathfrak{p}^n)^{\langle N \rangle}_a)\}_{n \geq 0}.$
\end{itemize}
\end{theorem}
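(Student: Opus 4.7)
My plan is to circle through the seven conditions, with most implications provided by monotonicity and a handful requiring serious work. From the chain of inclusions $(I+\Ann_R N)^n \subseteq ((I+\Ann_R N)^n)_a \subseteq (I^n)^{(N)}_a$ (Lemma 2.1) together with $(I^n)^{(N)}_a \subseteq (\mathfrak{p}^n)^{\langle N\rangle}_a$ for $\mathfrak{p}\in V(I)\cap\Supp(N)$, and monotonicity of $J\mapsto S(J)$, I would immediately obtain $(ii)\Rightarrow(iii)\Rightarrow(iv)$, $(v)\Rightarrow(vi)\Rightarrow(vii)$, $(ii)\Rightarrow(v)$ and $(iv)\Rightarrow(vii)$. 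Hence the theorem reduces to the two nontrivial links $(vii)\Rightarrow(i)$ and $(i)\Rightarrow(ii)$.

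For $(vii)\Rightarrow(i)$, I would argue by contradiction. Suppose some $\mathfrak{p}\in\bar{Q}^*(I,N)$ meets $S$, and let $k$ be the integer supplied by Proposition 3.9(ii) for this $\mathfrak{p}$. Applying $(vii)$ to $\mathfrak{p}$ and $k$ yields an $n$ with $J:=S((I+\Ann_R N)^n)\subseteq(\mathfrak{p}^k)^{\langle N\rangle}_a$. Since $I\subseteq \Rad J$ and $\Ann_R N\subseteq J$, Proposition 3.9(ii) forces $\mathfrak{p}\in\Ass_R R/(J+\Ann_R N)=\Ass_R R/J$; but by Remark 3.2, every associated prime of $R/J$ is disjoint from $S$, contradicting $\mathfrak{p}\cap S\neq\emptyset$.

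The main work is $(i)\Rightarrow(ii)$: given $n\geq 1$ I must exhibit $k\geq 1$ with $S((I^k)^{(N)}_a)\subseteq(I^n)^{(N)}_a$. Since $\Ass_R R/(I^n)^{(N)}_a\subseteq A^*_a(I,N)$ is finite (Theorem 2.3), I plan to reduce, via primary decomposition $(I^n)^{(N)}_a=\bigcap_j Q_j$ with $Q_j$ being $\mathfrak{p}_j$-primary, to verifying the corresponding inclusion $\mathfrak{p}_j$-locally for each $j$. For $\mathfrak{p}_j\cap S\neq\emptyset$, hypothesis $(i)$ forces $\mathfrak{p}_j\notin\bar{Q}^*(I,N)$; moreover, such $\mathfrak{p}_j$ is removed from $\Ass_R R/S((I^k)^{(N)}_a)$ by Remark 3.2, so its primary component does not obstruct the inclusion. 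For $\mathfrak{p}_j\cap S=\emptyset$ I would localize at $\mathfrak{p}_j$: then $S$ consists of units in $R_{\mathfrak{p}_j}$, so $S((I^k)^{(N)}_a)R_{\mathfrak{p}_j}=(I^k R_{\mathfrak{p}_j})^{(N_{\mathfrak{p}_j})}_a$, and the Chevalley-type statement of Theorem 3.8 (applied to $R_{\mathfrak{p}_j}$)---whose hypothesis is supplied by Proposition 3.8 together with $(i)$ and Proposition 3.9(v)---yields the required local inclusion for $k$ sufficiently large. Taking $k$ to be the maximum of the finitely many local exponents assembles the global inclusion.

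The central obstacle lies in verifying that Theorem 3.8 applies at each $R_{\mathfrak{p}_j}$: its hypothesis, via Proposition 3.8, is precisely the nonexistence of $\mathfrak{q}\in\mAss_{R_{\mathfrak{p}_j}} N_{\mathfrak{p}_j}$ with $\Rad(IR_{\mathfrak{p}_j}+\mathfrak{q})=\mathfrak{p}_j R_{\mathfrak{p}_j}$. This must be deduced from $(i)$ together with the characterization of $\bar{Q}^*(I,N)$ by Proposition 3.9, through a careful case analysis that exploits the synergy between the disjointness hypothesis on $S$ and the $\langle\mathfrak{p}\rangle$-saturation bound of Proposition 3.9(v).
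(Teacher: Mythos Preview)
Your chain of trivial implications is fine, and your argument for $(vii)\Rightarrow(i)$ is essentially correct; it is cleaner to invoke condition~(iii) of Proposition~3.10 rather than~(ii), since then the $\langle\mathfrak p\rangle$--saturation of $(I+\Ann_RN)^n$ is already an $S$--saturated ideal and the contradiction with Remark~3.2 is immediate, without needing to worry about whether $\Ann_RN$ lies inside~$J$.

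The real gap is in your plan for $(i)\Rightarrow(ii)$: the case split on whether $\mathfrak p_j\cap S$ is empty is inverted. When $\mathfrak p_j\cap S=\emptyset$ the image of $S$ in $R_{\mathfrak p_j}$ consists of units, so $S((I^k)^{(N)}_a)R_{\mathfrak p_j}=(I^k)^{(N)}_aR_{\mathfrak p_j}\subseteq(I^n)^{(N)}_aR_{\mathfrak p_j}\subseteq Q_jR_{\mathfrak p_j}$ already for every $k\ge n$; no Chevalley argument is needed, and in fact you could not verify the hypothesis of Theorem~3.9 there anyway, since $(i)$ says nothing about whether such a $\mathfrak p_j$ lies in $\bar Q^*(I,N)$. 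The genuinely hard case is $\mathfrak p_j\cap S\neq\emptyset$, which you dismiss on the grounds that $\mathfrak p_j\notin\Ass_R R/S((I^k)^{(N)}_a)$. But the $\mathfrak p_j$ are associated primes of the \emph{target} $(I^n)^{(N)}_a$: you still need $S((I^k)^{(N)}_a)\subseteq Q_j$, and knowing that $\mathfrak p_j$ is absent from the associated primes of the \emph{source} is of no help for that inclusion.

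The paper avoids any case split. For every $\mathfrak p\in\Supp N\cap V(I)$ it localizes at $\mathfrak p$ while \emph{keeping} $S$ (as its image $S'$), using that $\bar Q^*(I,N)$ is stable under localization so that condition~$(i)$ persists for $S'$; it then passes to the completion, again retaining~$(i)$ by Ahn's results. In the complete local ring one now applies Proposition~3.4 to the filtration $\{S'((I^n)^{(N)}_a)\}$ to obtain $\bigcap_n S'((I^n)^{(N)}_a)=\Rad(\Ann N)$, and Chevalley's theorem then produces the required~$m$ with $S'((I^m)^{(N)}_a)\subseteq(\mathfrak p^l)^{(N)}_a$. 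The descent to $(I^l)^{(N)}_a$ itself is then done via the integrally closed primary decomposition of Corollary~2.2, exactly as you outline. The point you are missing is that Chevalley must be applied to the $S$--saturated filtration, not to $\{(I^n)^{(N)}_a\}$ after first trying to discard~$S$.
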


\begin{proof}  In order to show  (i)  $\Longrightarrow$ (ii),  let $\mathfrak{p}\in
\Spec R$ with $I+ \Ann_RN \subseteq \mathfrak{p}$ and let $l \geq
1.$ We first show that there exists an integer $m \geq 1$ such
that $S((I^m)^{(N)}_a) \subseteq (\mathfrak{p}^l)^{\langle N \rangle}_a.$ To do
this, let $S'$ be the natural image of $S$ in $R_\mathfrak{p}.$
Since $\bar{Q}^*(I, N)$ behaves well under localization, we have
$S' \subseteq R_\mathfrak{p}\setminus\cup\{\mathfrak{q}\in
\bar{Q}^*(IR_\mathfrak{p}, N_\mathfrak{p})\}.$ Moreover,  it is easy
to see that $S'((I^mR_\mathfrak{p})^{(N_\mathfrak{p})}_a)
\subseteq (\mathfrak{p}^lR_\mathfrak{p})^{\langle N_\mathfrak{p}\rangle }_a$
implies $S((I^n)^{(N)}_a) \subseteq (\mathfrak{p}^n)^{\langle N \rangle}_a.$
Therefore we may assume that $R$ is local with maximal ideal
$\mathfrak{p}.$ Then $(\mathfrak{p}^n)^{\langle N \rangle}_a=
(\mathfrak{p}^n)^{(N)}_a.$ By view of Lemma \ref{2.7} and
\cite[Proposition 3.8]{Ah} we may assume in addition that $R$ is complete. Whence, in view of \cite[Lemma 3.5]{Ah},
for any $\mathfrak{q} \in \mAss_R N$,   $S$ is
disjoint from $I+ \mathfrak{q}.$ Therefore, by Proposition \ref{2.10} we have $
\bigcap_{n \geq 1} S((I^n)^{(N)}_a) = \Rad (\Ann_RN).$ Consequently $$
\bigcap_{n\geq1}S((I^n)^{(N)}_a)/ \Rad(\Ann_R N)= 0.$$ As the ring
$R/\Rad(\Ann_RN)$ is complete, Chevalley's Theorem (see
\cite[Theorem 30.1]{Na}) implies the existence of an integer $m \geq 1$
such that $$S((I^m)^{(N)}_a)/\Rad(\Ann_RN) \subseteq
(\mathfrak{p}/\Rad(\Ann_RN))^l.$$  Hence
\[
S((I^m)^{(N)}_a) \subseteq \mathfrak{p}^l+ \Rad(\Ann_RN) \subseteq
(\mathfrak{p}^l)^{(N)}_a.
\]
Now, in view of Corollary \ref{2.6}, we can consider $\mathfrak{q}_1\cap \dots \cap \mathfrak{q}_n$ a
minimal primary decomposition of $(I^l)^{(N)}_a$ where
$\mathfrak{q}_i$ is $\mathfrak{p}_i$-primary and integrally closed
with respect to  $N$ for every $i= 1, \ldots, n.$ Then there exists an
integer $l_i$ such that $\mathfrak{p}_i^{l_i} \subseteq
\mathfrak{q}_i$ for $i= 1, \ldots, n$, and moreover for some $m_i$
we have $S((I^{m_i})^{(N)}_a)\subseteq
(\mathfrak{p}_i^{l_i})^{\langle N \rangle}_a.$  Let $m=
\max\{m_1,\ldots, m_n\}.$ Then we deduce that
$S((I^m)^{(N)}_a)\subseteq (\mathfrak{p}_i^{l_i})^{\langle N \rangle}_a$ for
each $1\leq i\leq n.$
On the other hand,  we have
$$(\mathfrak{p}_i^{l_i})^{\langle N \rangle}_a \subseteq \bigcup _{s\in
R\setminus \mathfrak{p}_i}((\mathfrak{q}_i)^{(N)}_a:_Rs)=
\mathfrak{q}_i,$$ and therefore $S((I^m)^{(N)}_a)\subseteq
\bigcap_{i=1}^n \mathfrak{q}_i.$ This completes the proof of (ii).

The implications (ii) $\Longrightarrow$ (iii) $\Longrightarrow$ (iv) are
obviously true. In order to prove the implication (iv)$\Longrightarrow$ (v),  it is enough to show that
\[
S \subseteq R\setminus \bigcup \{\mathfrak{p}\in \bar{Q}^*(I,
N)\}.
\]
To do this, let $\mathfrak{p}\in \bar{Q}^*(I, N).$ Then, by Proposition
\ref{3.1} there exists an integer $k \geq 0$ such that $((I+
\Ann_R N)^m :_R \langle \mathfrak{p}\rangle) \nsubseteq
(\mathfrak{p}^k)^{\langle N \rangle}_a$ for all integers $m \geq 0.$ On the
other hand, by the assumption there is an integer $l \geq 0$ such
that $S((I+ \Ann_RN)^l) \subseteq (I^k)^{(N)}_a.$ Therefore,
 $$(I+ \Ann_RN)^l :_R \langle \mathfrak{p}\rangle \nsubseteq
S((I+ \Ann_R N)^l).$$ Then it is readily to see that $\mathfrak{p}
\cap S = \emptyset$,  as required.

The conclusions (v) $\Longrightarrow$ (vi) $\Longrightarrow$ (vii) are
trivial. Finally, an argument similar to that used in the proof of
the implication (iv) $\Longrightarrow$ (v) shows that (vii)
$\Longrightarrow$ (ii) holds.
\end{proof}

An immediate consequence of the Theorem \ref{3.2} is the following corollary.

\begin{corollary} \label{3.3}
Let $R$ be a Noetherian ring,  $N$  a  finitely
generated $R$-module and $I$ an ideal of $R.$ Then the following
conditions are equivalent:
\begin{itemize}
\item[(i)] $\bar{Q}^*(I, N)= \mAss_R N/IN.$

\item[(ii)] The topologies defined by $\{(I^n)^{(N)}_a \}_{n \geq
0}$ and $\{(I^n)^{\langle N \rangle}_a \}_{n \geq 0}$ are equivalent.
\end{itemize}
\end{corollary}
\begin{proof}
Let $S=R\setminus\bigcup\{\mathfrak{p}\in \mAss_RN/IN\}.$ Then  $S((I^n)^{(N)}_a )=(I^n)^{\langle N\rangle}_a $. Now, if
$\bar{Q}^*(I, N)= \mAss_R N/IN,$  then   $S=R\setminus\bigcup\{\mathfrak{p}\in \bar{Q}^*(I, N)\}.$    Hence Theorem \ref{3.2} implies
that the topologies defined by $\{(I^n)^{(N)}_a \}_{n \geq 0}$ and $\{(I^n)^{\langle N \rangle}_a \}_{n \geq 0}$ are equivalent.  Conversely, if these
 topologies are equivalent, then it follows from Theorem \ref{3.2} that $S\subseteq R\setminus\bigcup\{\mathfrak{p}\in \bar{Q}^*(I, N)\}$, and so
 $\bar{Q}^*(I, N)\subseteq\mAss_R N/IN$. On the other hand,  using  \cite[Lemma 3.5]{Ah}, one easily  sees that  $\mAss_R N/IN\subseteq \bar{Q}^*(I, N)$;  and so
 $\bar{Q}^*(I, N)= \mAss_R N/IN.$ This completes the proof.
\end{proof}

\section{Local cohomology and ideal topologies}
The purpose of this section is to establish  the equivalence between the topologies defined by $\{(I^n)_a^{(N)}\}_{n\geq1}$ and
$\{S((I^n)_a^{(N)})\}_{n\geq1}$  in terms of the vanishing of the  top local cohomology module $H^{\dim N}_I(N)$. This will
generalize the main result of Marti-Farre \cite{MF}, as an extension of the main results of Call \cite[Corollary 1.4]{Ca},
 Call-Sharp \cite{CS} and Schenzel \cite[Corollary  4.3]{Sc2}.

\begin{theorem} \label{4.1}
Let $(R, \mathfrak{m})$ be a  local (Noetherian)  ring,  $N$
a  finitely generated $R$-module of dimension $d$ and
$I$ an ideal of $R.$  Consider the following conditions:
\begin{itemize}
\item[(i)] There exists a multiplicatively closed subset $S$ of
$R$ such that $ \mathfrak{m} \cap S \neq\emptyset$ and that the
topologies defined by $\{S((I^n)^{(N)}_a)\}_{n\geq0}$ and
$\{(I^n)^{(N)}_a \}_{n\geq0}$ are equivalent.

\item[(ii)] $H^d_I(N)= 0.$
\end{itemize}
Then \text{\rm(i)} $\Longrightarrow$ \text{\rm (ii)}; and these conditions are equivalent,
whenever $N$ is quasi-unmixed.
\end{theorem}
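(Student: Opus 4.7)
The plan is to bridge the topological condition on $S$ with the vanishing of $H^d_I(N)$ by routing both through the set $\bar{Q}^*(I, N)$: Theorem 3.11 will handle the topological side, while a module-theoretic Lichtenbaum--Hartshorne vanishing theorem will handle the cohomological side. Under the local hypothesis the two descriptions of $\mathfrak{m}$ via quintasymptotic primes and via maximal-dimensional associated primes of $\hat{N}$ will be compared by elementary dimension theory, and quasi-unmixedness will enter precisely to force these two descriptions to coincide.

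For the implication (i) $\Longrightarrow$ (ii), I first apply Theorem 3.11 to the equivalence of $\{(I^n)^{(N)}_a\}$ and $\{S((I^n)^{(N)}_a)\}$, which forces $S \cap \mathfrak{p} = \emptyset$ for every $\mathfrak{p} \in \bar{Q}^*(I, N)$. Combined with $\mathfrak{m} \cap S \neq \emptyset$, this yields $\mathfrak{m} \notin \bar{Q}^*(I, N)$; equivalently, no $\mathfrak{q} \in \mAss_{\hat{R}}\hat{N}$ satisfies $\Rad(I\hat{R} + \mathfrak{q}) = \mathfrak{m}\hat{R}$. Since any $\mathfrak{p} \in \Ass_{\hat{R}}\hat{N}$ with $\dim\hat{R}/\mathfrak{p} = d$ automatically belongs to $\mAss_{\hat{R}}\hat{N}$, no such $\mathfrak{p}$ can satisfy the above radical condition either. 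An appeal to the module version of the Lichtenbaum--Hartshorne vanishing theorem (the standard extension of \cite[Theorem 8.2.1]{BS} to finitely generated modules) then delivers $H^d_I(N) = 0$. Note that this half of the proof does not use the quasi-unmixed hypothesis.

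For (ii) $\Longrightarrow$ (i) under quasi-unmixedness, I reverse the argument. Assuming $H^d_I(N) = 0$, the Lichtenbaum--Hartshorne criterion yields that no $\mathfrak{p} \in \Ass_{\hat{R}}\hat{N}$ with $\dim\hat{R}/\mathfrak{p} = d$ satisfies $\Rad(I\hat{R} + \mathfrak{p}) = \mathfrak{m}\hat{R}$. By quasi-unmixedness, every $\mathfrak{q} \in \mAss_{\hat{R}}\hat{N}$ has $\dim\hat{R}/\mathfrak{q} = d$, so this is exactly the assertion $\mathfrak{m} \notin \bar{Q}^*(I, N)$. The finiteness of $\bar{Q}^*(I, N)$ (Ahn's extension of McAdam's finiteness theorem) together with prime avoidance then produces $s \in \mathfrak{m} \setminus \bigcup\{\mathfrak{p}: \mathfrak{p} \in \bar{Q}^*(I, N)\}$. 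Setting $S := \{s^i : i \geq 0\}$, we have $\mathfrak{m} \cap S \ni s$ and $S \cap \bar{Q}^*(I, N) = \emptyset$, so Theorem 3.11 furnishes the required topological equivalence.

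The main obstacle is locating and invoking the correct module-theoretic form of the Lichtenbaum--Hartshorne theorem, and verifying in detail that ``every maximal-dimensional element of $\Ass_{\hat{R}}\hat{N}$ lies in $\mAss_{\hat{R}}\hat{N}$'' is what bridges the two versions of the radical condition. Under quasi-unmixedness the two formulations coincide on the nose, which is why that hypothesis is required only for the reverse direction; in the general case, the condition $\mathfrak{m} \notin \bar{Q}^*(I, N)$ is strictly stronger than the Lichtenbaum--Hartshorne condition, accounting for the asymmetry in the theorem's statement.
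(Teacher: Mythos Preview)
Your proof is correct and follows essentially the same route as the paper: both directions are mediated by the condition $\mathfrak{m}\notin\bar{Q}^*(I,N)$, with Theorem~3.11 handling the topological side and the module-theoretic Lichtenbaum--Hartshorne theorem (which the paper cites as \cite[Corollary~3.4]{DS}) handling the cohomological side. The only cosmetic difference is that for (ii)$\Rightarrow$(i) the paper takes $S=R\setminus\bigcup\{\mathfrak{p}\in\bar{Q}^*(I,N)\}$ directly rather than building $S=\{s^i\}$ via prime avoidance; both choices work, and prime avoidance is implicit in the paper's verification that $\mathfrak{m}\cap S\neq\emptyset$ anyway.
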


\begin{proof} We start with the proof of the implication (i)
$\Longrightarrow$ (ii). By Theorem \ref{3.2} we have $S
\subseteq R \setminus \bigcup \{\mathfrak{p}\in \bar{Q}^*(I, N)\}.$
Then  $\mathfrak{m} \not\in \bar{Q}^*(I, N).$
Therefore for all $\mathfrak{q}\in \mAss_{\hat{R}}\hat{N}$ we have
$\dim \hat{R}/I\hat{R}+ \mathfrak{q} >0.$ By the
Lichtenbaum-Hartshorne Theorem (see \cite[Corollary 3.4]{DS}), it follows
that $H^d_I(N)= 0.$

Now, assume that $N$ is quasi-unmixed and that (ii) holds. We show
that (i) is true. To this end, let $S= R \setminus \bigcup
\{\mathfrak{p} \in \bar{Q}^*(I, N)\}.$ Then, in view of Theorem
\ref{3.2},   the topologies defined by
$\{S((I^n)^{(N)}_a)\}_{n\geq0}$ and $\{(I^n)^{(N)}_a \}_{n\geq0}$
are equivalent. Hence, it is enough to show that $ \mathfrak{m}
\cap S \neq\emptyset.$ Suppose, the contrary, namely $
\mathfrak{m} \cap S = \emptyset.$ Then $\mathfrak{m}\in
\bar{Q}^*(I, N).$ So there exists $ \mathfrak{q}\in
\mAss_{\hat{R}}\hat{N}$ such that $\mathfrak{m}\hat{R}=
\Rad(I\hat{R}+ \mathfrak{q}).$ As $N$ is quasi-unmixed it follows
that $\dim \hat{R}/I\hat{R}+ \mathfrak{q}= 0$ for some $
\mathfrak{q} \in \mAss_{\hat{R}}\hat{N}$ such that $\dim
\hat{R}/\mathfrak{q}= d.$  Now,  use \cite[Corollary 3.4]{DS} to see that
$H^d_I(N) \neq 0$,  which is a contradiction.
\end{proof}

The final results will be the strengthened and a generalized
version of a corresponding one by  Marley (\cite[Corollaries 2.4 and  2.5]{M}) and  Naghipour-Sedghi  (\cite[Corollary 3.3]{NS}).
\begin{corollary} \label{3.5}
Assume that $R$ is a Noetherian ring. Let $N$ be a
finitely generated $R$-module of dimension $d$ and $I$
an ideal of $R.$ Then $\Supp(H^d_I(N)) \subseteq \bar{Q}^*(I, N).$ Moreover the equality holds, whenever $N$ is Cohen-Macaulay.
\end{corollary}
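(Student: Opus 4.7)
The plan is to reduce the question to the local ring $R_{\mathfrak{p}}$ for each candidate prime $\mathfrak{p}$ and then invoke Theorem 4.1 in conjunction with the topological characterisation of Theorem 3.11. Two facts are used silently throughout: $\bar{Q}^{*}(-,-)$ is stable under localization (immediate from its definition in terms of $\mAss_{\hat{R}_{\mathfrak{p}}}\hat{N}_{\mathfrak{p}}$), and local cohomology commutes with localization, so $H^{d}_{I}(N)_{\mathfrak{p}}=H^{d}_{IR_{\mathfrak{p}}}(N_{\mathfrak{p}})$ for every $\mathfrak{p}\in\Spec R$.

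For the inclusion $\Supp H^{d}_{I}(N)\subseteq\bar{Q}^{*}(I,N)$, I would pick $\mathfrak{p}\in\Supp H^{d}_{I}(N)$. Grothendieck vanishing forces $\dim N_{\mathfrak{p}}\geq d$, while $\dim N_{\mathfrak{p}}\leq d$ is automatic, so $\dim N_{\mathfrak{p}}=d$. Now apply Theorem 4.1 in $(R_{\mathfrak{p}},\mathfrak{p}R_{\mathfrak{p}})$: the non-vanishing of $H^{d}_{IR_{\mathfrak{p}}}(N_{\mathfrak{p}})$ is precisely the failure of condition (ii), so by the unconditional direction from (i) to (ii) condition (i) also fails. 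Theorem 3.11 nevertheless supplies the multiplicatively closed set $S_{0}:=R_{\mathfrak{p}}\setminus\bigcup\bar{Q}^{*}(IR_{\mathfrak{p}},N_{\mathfrak{p}})$ for which the topologies defined by $\{(I^{n}R_{\mathfrak{p}})^{(N_{\mathfrak{p}})}_{a}\}_{n\geq 1}$ and $\{S_{0}((I^{n}R_{\mathfrak{p}})^{(N_{\mathfrak{p}})}_{a})\}_{n\geq 1}$ are equivalent; since (i) fails, $S_{0}$ must meet the maximal ideal trivially, i.e.\ $\mathfrak{p}R_{\mathfrak{p}}\subseteq\bigcup\bar{Q}^{*}(IR_{\mathfrak{p}},N_{\mathfrak{p}})$. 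Finiteness of $\bar{Q}^{*}$, prime avoidance, and maximality of $\mathfrak{p}R_{\mathfrak{p}}$ then give $\mathfrak{p}R_{\mathfrak{p}}\in\bar{Q}^{*}(IR_{\mathfrak{p}},N_{\mathfrak{p}})$, whence $\mathfrak{p}\in\bar{Q}^{*}(I,N)$.

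For the reverse inclusion under the Cohen--Macaulay hypothesis, the plan is to run the same argument backwards, exploiting that each $N_{\mathfrak{p}}$ is again Cohen--Macaulay -- in particular quasi-unmixed -- so as to unlock the full biconditional in Theorem 4.1. Starting from $\mathfrak{p}\in\bar{Q}^{*}(I,N)$, localization yields $\mathfrak{p}R_{\mathfrak{p}}\in\bar{Q}^{*}(IR_{\mathfrak{p}},N_{\mathfrak{p}})$; this excludes any $S\subseteq R_{\mathfrak{p}}$ avoiding $\bar{Q}^{*}(IR_{\mathfrak{p}},N_{\mathfrak{p}})$ from meeting the maximal ideal, so condition (i) of Theorem 4.1 fails, and by the biconditional $H^{\dim N_{\mathfrak{p}}}_{IR_{\mathfrak{p}}}(N_{\mathfrak{p}})\neq 0$. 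The remaining step, reconciling $\dim N_{\mathfrak{p}}$ with the prescribed $d=\dim N$, is the main obstacle: one must leverage the Cohen--Macaulay assumption to pin $\dim\hat{R}_{\mathfrak{p}}/\mathfrak{q}=d$ for the witnessing $\mathfrak{q}\in\mAss_{\hat{R}_{\mathfrak{p}}}\hat{N}_{\mathfrak{p}}$, so that $H^{d}_{IR_{\mathfrak{p}}}(N_{\mathfrak{p}})\neq 0$ and therefore $\mathfrak{p}\in\Supp H^{d}_{I}(N)$. Apart from this dimensional bookkeeping, everything reduces to a direct application of Theorems 3.11 and 4.1.
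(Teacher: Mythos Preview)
Your argument is logically sound but takes a roundabout path compared with the paper. For the inclusion $\Supp H^d_I(N)\subseteq\bar{Q}^*(I,N)$, the paper applies the Lichtenbaum--Hartshorne vanishing theorem for modules (\cite[Corollary 3.4]{DS}) directly: once $\dim N_\mathfrak{p}=d$, the non-vanishing of $H^d_{IR_\mathfrak{p}}(N_\mathfrak{p})$ immediately produces a $\mathfrak{q}\in\mAss_{\hat{R}_\mathfrak{p}}\hat{N}_\mathfrak{p}$ with $\Rad(I\hat{R}_\mathfrak{p}+\mathfrak{q})=\mathfrak{p}\hat{R}_\mathfrak{p}$, which is exactly the defining condition for $\mathfrak{p}\in\bar{Q}^*(I,N)$. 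Your route through Theorems~3.11 and~4.1 followed by prime avoidance reaches the same conclusion, but since the proof of Theorem~4.1 itself combines Theorem~3.11 with Lichtenbaum--Hartshorne, you are essentially re-deriving that theorem's contrapositive; the direct appeal is cleaner.

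For the reverse inclusion under the Cohen--Macaulay hypothesis, the paper again goes straight to Lichtenbaum--Hartshorne: because $\hat{N}_\mathfrak{p}$ is Cohen--Macaulay (hence equidimensional), the witnessing $\mathfrak{q}$ from the definition of $\bar{Q}^*$ satisfies $\dim\hat{R}_\mathfrak{p}/\mathfrak{q}=\dim\hat{N}_\mathfrak{p}$, and the non-vanishing criterion gives $H^{\dim N_\mathfrak{p}}_{IR_\mathfrak{p}}(N_\mathfrak{p})\neq 0$. Your detour via the biconditional in Theorem~4.1 (using that Cohen--Macaulay implies quasi-unmixed) arrives at the same endpoint. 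You are right to single out the identification $\dim N_\mathfrak{p}=d$ as the residual issue; the paper's proof writes $H^d$ at this step without separately justifying that equality, so your instinct to flag it is appropriate rather than dismissible as bookkeeping.
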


\begin{proof} Let $\mathfrak{p}\in \Supp(H^d_I(N)).$ Then
$H^d_{IR_\mathfrak{p}}(N_{\mathfrak{p}})\neq 0,$  and so $\dim
N_{\mathfrak{p}} = d.$ Hence, in view of  the
Lichtenbaum-Hartshorne Theorem (cf.  \cite[Corollary  3.4]{DS}) there
exists $ \mathfrak{q}\in \mAss_{\hat{R_{\mathfrak{p}}
}}\hat{N_{\mathfrak{p}}}$ such that
$\mathfrak{p}\hat{R_{\mathfrak{p}}}= \Rad(I\hat{R_{\mathfrak{p}}}+
\mathfrak{q}).$ Thus  $\mathfrak{p}\in
\bar{Q}^*(I, N)$, and  so $\Supp(H^d_I(N))\,\subseteq\,\bar{Q}^*(I,N).$

In order to prove the second assertion, let $\mathfrak{p}\in\bar{Q}^*(I, N)$. Then  there
exists $ \mathfrak{q}\in \mAss_{\hat{R_{\mathfrak{p}} }}\hat{N_{\mathfrak{p}}}$ such that
$\mathfrak{p}\hat{R_{\mathfrak{p}}}= \Rad(I\hat{R_{\mathfrak{p}}}+\mathfrak{q}).$  Now, since
$N$ is Cohen-Macaulay, we deduce that $\dim \hat{R_{\mathfrak{p}} }/\mathfrak q=\dim\hat{N_{\mathfrak{p}}}$.
Whence, in view of the Lichtenbaum-Hartshorne Theorem, $H^d_{IR_\mathfrak{p}}(N_{\mathfrak{p}})\neq 0,$  and so  $\mathfrak{p}\in \Supp(H^d_I(N)).$
\end{proof}

\begin{corollary} \label{3.6}
Let $(R, \mathfrak{m})$ be a local (Noetherian)  ring,  $N$
a  finitely generated $R$-module of dimension $d$ and
$I$ an ideal of $R.$  Then
\[
\Supp(H^{d-1}_I(N))\,\subseteq\,\bar{Q}^*(I, N) \cup\,
\{\mathfrak{m}\}.
\]
Therefore $\Ass_R H^{d-1}_I(N)$ is a finite set.
\end{corollary}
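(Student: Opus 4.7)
The plan is to fix $\mathfrak{p}\in\Supp(H^{d-1}_I(N))$ with $\mathfrak{p}\neq\mathfrak{m}$ and show $\mathfrak{p}\in\bar{Q}^*(I,N)$; the case $\mathfrak{p}=\mathfrak{m}$ requires nothing. Localising at $\mathfrak{p}$ gives $H^{d-1}_{IR_\mathfrak{p}}(N_\mathfrak{p})\neq 0$, and in particular $\mathfrak{p}\in\Supp(N)$, so $\Ann_RN\subseteq\mathfrak{p}$.

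The heart of the argument is the equality $\dim N_\mathfrak{p}=d-1$. Grothendieck vanishing applied to the nonzero module $H^{d-1}_{IR_\mathfrak{p}}(N_\mathfrak{p})$ forces $\dim N_\mathfrak{p}\geq d-1$. For the reverse inequality I work in the local ring $R/\Ann_RN$, which has dimension $d$: since $\mathfrak{p}/\Ann_RN$ is a prime strictly contained in the maximal ideal $\mathfrak{m}/\Ann_RN$, any chain of primes terminating at $\mathfrak{p}/\Ann_RN$ can be prolonged by one step to $\mathfrak{m}/\Ann_RN$, whence $\height(\mathfrak{p}/\Ann_RN)+1\leq d$ and therefore $\dim N_\mathfrak{p}=\height(\mathfrak{p}/\Ann_RN)\leq d-1$. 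Consequently $H^{d-1}_{IR_\mathfrak{p}}(N_\mathfrak{p})$ is the top local cohomology of $N_\mathfrak{p}$ over the local ring $R_\mathfrak{p}$ relative to $IR_\mathfrak{p}$, and since it is nonzero its support contains the maximal ideal $\mathfrak{p}R_\mathfrak{p}$. Corollary \ref{3.5} applied to the triple $(R_\mathfrak{p},N_\mathfrak{p},IR_\mathfrak{p})$ then yields $\mathfrak{p}R_\mathfrak{p}\in\bar{Q}^*(IR_\mathfrak{p},N_\mathfrak{p})$, and the identification of the completion of $R_\mathfrak{p}$ at its maximal ideal with $\hat{R}_\mathfrak{p}$ in the definition of $\bar{Q}^*$ transports this back to $\mathfrak{p}\in\bar{Q}^*(I,N)$.

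The second assertion is then immediate: $\Ass_R H^{d-1}_I(N)\subseteq\Supp(H^{d-1}_I(N))\subseteq\bar{Q}^*(I,N)\cup\{\mathfrak{m}\}$, and $\bar{Q}^*(I,N)$ is a finite set (it is contained in the asymptotic set $A^*_a(I,N)$ of Definition \ref{4.4}, which is finite by Theorem \ref{2.3}). The step that requires the most care is the height bound in the middle paragraph, which is precisely where the hypothesis $\mathfrak{p}\neq\mathfrak{m}$ is consumed; once it is in place, everything else is a localisation-and-completion bookkeeping exercise built on Corollary \ref{3.5}.
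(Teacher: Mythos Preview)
Your proof is correct and follows essentially the same route as the paper's: localise at $\mathfrak{p}\neq\mathfrak{m}$, use Grothendieck vanishing together with the height bound coming from $\mathfrak{p}\subsetneq\mathfrak{m}$ to pin down $\dim N_\mathfrak{p}=d-1$, and then invoke Corollary~\ref{3.5} (the paper phrases this last step as ``by the proof of Corollary~\ref{3.5}''). Your write-up simply makes explicit the dimension computation and the finiteness of $\bar{Q}^*(I,N)$ that the paper leaves implicit.
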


\begin{proof} Let $\mathfrak{p}\in \Supp(H^{d-1}_I(N))$ such that
$\mathfrak{p}\neq \mathfrak{m}.$ Then
$H^{d-1}_{IR_{\mathfrak{p}}}(N_{\mathfrak{p}})\neq 0$, and so
$\dim N_{\mathfrak{p}} = d-1.$  Now, by the proof of Corollary
\ref{3.5} we have  $\mathfrak{p}\in \bar{Q}^*(I, N)$, as required.
\end{proof}

\begin{center}
{\bf Acknowledgments}
\end{center}
The authors are deeply grateful to the
referee for providing numerous  suggestions to improve the readability of the paper and for
drawing the authors'  attention to Lemma 2.3 and Theorem  2.5.  Also, we would like to thank  Dr.  Monireh Sedghi for a  careful reading of  the original manuscript and helpful suggestions.


\end{document}